\theoremstyle{plain}%
 \newtheorem{theorem}{Theorem}
 \newtheorem{corollary}{Corollary}
\theoremstyle{remark}
\theoremstyle{definition}
\newtheorem{example}{Example}
\begin{document}

\begin{center}
 {\large New Clebsch--Gordan-type integrals involving threefold products of complete elliptic integrals}

 \ 

 John M.\ Campbell 

 \ 

\end{center}

\begin{abstract}
 Multiple elliptic integrals related to the generalized Clebsch--Gordan (CG) integral are of importance in many areas in physics and special functions theory. 
 Zhou has introduced and applied Legendre function-based techniques to prove symbolic evaluations for integrals of CG form involving twofold and 
 threefold products of complete elliptic integral expressions, and this includes Zhou's remarkable proof of an open problem due to Wan. The foregoing 
 considerations motivate the results introduced in this article, in which we prove closed-form evaluations for new CG-type integrals that involve threefold 
 products of the complete elliptic integrals $\text{{\bf K}}$ and $\text{{\bf E}}$. Our methods are based on the use of fractional derivative operators, via 
 a variant of a technique we had previously referred to as \emph{semi-integration by parts}. 
\end{abstract}

\noindent {\footnotesize Keywords: multiple elliptic integral; complete elliptic integral; Clebsch--Gordan integral; closed form}

\section{Introduction}\label{sectionIntroduction}
 The expression \emph{multiple elliptic integral} refers to an integral involving an integrand factor given by the complete elliptic integral of the first kind 
\begin{equation}\label{Kdefinition}
 \text{{\bf K}}(k) := \int_{0}^{\frac{\pi}{2}} \left( 1 - k^2 \sin^2 \theta \right)^{-1/2} \, d\theta 
\end{equation}
 or the complete elliptic integral of the second kind 
\begin{equation}\label{Edefinition}
 \text{{\bf E}}(k) := \int_{0}^{\frac{\pi}{2}} \sqrt{1 - k^2 \sin^2 \theta} \, d\theta, 
\end{equation}
 up to a change of variables. Much about the study of multiple elliptic integrals is due to how mathematical objects of this form naturally arise within many 
 different areas of physics \cite{Kaplan1950}. For example, as expressed in \cite{Glasser1976}, practical problems concerning three-dimensional lattices 
 often give rise to triple integrals that are reducible to 
\begin{equation}\label{multipleK}
 \int_{\alpha}^{\beta} F(\mu) \text{{\bf K}}(\mu) \, d\mu 
\end{equation}
 for an elementary function $F$ \cite{Glasser1976}, and integrals as in \eqref{multipleK} involving twofold products 
 \cite{GlasserGuttmann1994,WanZucker2016} and threefold products \cite{WanZucker2016} of complete elliptic integral expressions have been similarly 
 applied in the context of the study of lattices. Zhou's 2014 article on multiple elliptic integrals \cite{Zhou2014Legendre}, in which Zhou solved the open 
 problem of proving the formulas 
\begin{equation}\label{openproblem}
 \int_{0}^{1} \text{{\bf K}}^{3}\left( \sqrt{1 - k^2} \right) \, dk = 6 \int_{0}^{1} \text{{\bf K}}^{2}(k) \text{{\bf K}}\left( \sqrt{1 - k^2} \right) k \, dk 
 = \frac{\Gamma^{8}\left( \frac{1}{4} \right)}{ 128 \pi^2} 
\end{equation}
 conjectured by Wan in 2012 \cite{Wan2012}, referenced applications of expressions as in \eqref{multipleK} within many different areas in physics, and 
 serves as main source of motivation behind the multiple elliptic integrals introduced in this article. A remarkable aspect of the definite integral evaluations 
 shown in \eqref{openproblem} is given by these integrals involving \emph{threefold} products of complete elliptic integral expressions, as opposed, for 
 example, to the many onefold or twofold products of $\text{{\bf K}}$- and/or $\text{{\bf E}}$-expressions in the integrands recorded in many of the 
 sections of a standard reference on multiple elliptic integrals \cite[\S4.21.1--4.22.14]{Brychkov2008}, noting that none of the integrals in 
 \cite{Brychkov2008} involve {threefold} products of elliptic integrals, with very little known about integrals involving such threefold products, relative to the 
 onefold or twofold cases. In our article, we introduce many new evaluations for definite integrals that resemble Wan's integrals in \eqref{openproblem} 
 and involve threefold products of complete elliptic expressions and that have not, to the best of our knowledge, appeared in any equivalent forms 
 in any relevant literature. In contrast to the Legendre function-based techniques introduced by Zhou in \cite{Zhou2014Legendre}, we instead make use 
 of Caputo operators, building on the recent work in \cite{Campbell2022,CampbellCantariniDAurizio2022}. 

 Zhou's Legendre polynomial-based approach from \cite{Zhou2014Legendre} was applied to prove the below closed form for what is referred to as a 
 \emph{multiple elliptic integral in Clebsch--Gordan (CG) form} \cite{Campbell2022,Zhou2014Legendre}, which was highlighted in Corollary 3.2 in 
 \cite{Zhou2014Legendre}, and which had been previously included in the standard reference on multiple elliptic integrals 
 previously referenced \cite[p.\ 278]{Brychkov2008}: 
\begin{equation}\label{maintwofold}
 2 \int_{0}^{1} \text{{\bf K}}\left( \sqrt{x} \right) \text{{\bf K}}\left( \sqrt{1 - x} \right) \, dx = \frac{\pi^3}{4}. 
\end{equation}
 The closed-form formula in \eqref{maintwofold} was later proved and generalized in \cite{Campbell2022} using a fractional-calculus based identity 
 introduced in \cite{CampbellCantariniDAurizio2022} and referred to as \emph{semi-integration by parts} (SIBP) \cite{CampbellCantariniDAurizio2022}. We 
 again emphasize the threefold nature of the products of complete elliptic functions in Wan's integrals in \eqref{openproblem}, in contrast to the twofold 
 product in the integrand in \eqref{maintwofold}. If the $F$-factor in \eqref{multipleK} is elementary, then Fubini-type interchanges often may be used to 
 evaluate \eqref{multipleK} by rewriting the $\text{{\bf K}}$-factor according to \eqref{Kdefinition}, which shows how this onefold case is much more 
 manageable relative to threefold products as in the Wan integrals shown in \eqref{openproblem}; a similar argument may be used to explain why 
 integrands with twofold products of complete elliptic integrals, as in the CG-type integral in \eqref{maintwofold}, are also relatively manageable compared 
 to the recalcitrant nature of integrals as in \eqref{openproblem}, noting that the challenging nature of the integrals in \eqref{openproblem} was also 
 considered by Wan and Zucker in \cite{WanZucker2016}. The foregoing considerations strongly motivate the exploration as to how the fractional 
 calculus-based approaches from \cite{Campbell2022,CampbellCantariniDAurizio2022} 
 may be improved or otherwise altered so as to be applicable to 
 integrals involving threefold products of $\text{{\bf K}}$ and/or $\text{{\bf E}}$. This is the main purpose of our article. 
 In this regard, we have succeeded in applying our new methods to 
 prove the new results highlighted in Section \ref{subsectionMotivating} below. 

\subsection{New multiple elliptic integrals}\label{subsectionMotivating}
 The main results in our article are given by how we have generalized the SIBP theorem from \cite{CampbellCantariniDAurizio2022}, together with our 
 applications of this generalization, as in the new closed forms highlighted below. 
 Zhou \cite{Zhou2014Legendre} has expressed and explored the analytically challenging nature of expressing integrals involving three or more complete 
 elliptic integrals in terms of fundamental mathematical constants, which  strongly motivates the new results highlighted 
 as \eqref{mainresult1}--\eqref{mainresult12}. With regard to our 
 new symbolic evaluation in \eqref{mainresult2}, we are letting $G = 1 - \frac{1}{3^2} + \frac{1}{5^2} - \cdots$
 denote Catalan's constant, and with reference to the new result in \eqref{mainresult3}, 
 we are letting $\phi = \frac{1 + \sqrt{5}}{2}$ denote the Golden Ratio constant. 
\begin{align}
 & \frac{\pi ^3 (1+4 \ln (2))}{32} 
 = \int_0^1 \text{{\bf E}}\left(\sqrt{1-x}\right) \text{{\bf K}}^{2}\left(\sqrt{\frac{1-\sqrt{1-x}}{2} }\right) \, dx, \label{mainresult1} \\
 & \ \nonumber \\
 & \frac{\pi ^2 (4 G+2+\pi \ln (2))}{16 \sqrt{2}}
 = \int_0^1 \text{{\bf E}}\left(\sqrt{1-x}\right) \text{{\bf K}}^2\left(\sqrt{\frac{1}{2}-\frac{1}{2} \sqrt{1-\frac{x}{2}}}\right) \, dx, \label{mainresult2} \\
 & \ \nonumber \\
 & \frac{\pi ^2}{2} \left(\frac{\pi ^2}{20}+\frac{3 \ln (\phi )}{2}-\frac{\sqrt{5}}{4}\right)
 = \int_0^1 \text{{\bf E}}\left(\sqrt{1-x}\right) \text{{\bf K}}^2\left(\sqrt{\frac{1}{2}-\frac{\sqrt{4+x}}{4}}\right) \, dx, \label{mainresult3} \\
 & \ \nonumber \\
 & \pi ^2 \left(\frac{17}{30}-\frac{\ln \left(1+\sqrt{2}\right)}{2 \sqrt{2}}\right)
 = \int_0^1 \frac{\text{{\bf E}}\left(\sqrt{1-x}\right)
 \text{{\bf K}}^{2}\left(\sqrt{\frac{1}{2}-\frac{\sqrt{\frac{1-\sqrt{1-x}}{x}}}{\sqrt{2}}}\right)}{\sqrt[4]{2 \sqrt{1-x} - x + 2}} \, dx, \label{mainresult4} \\
 & \ \nonumber \\
 & \frac{1}{{\sqrt[4]{2}}} \left( \frac{47 \pi ^2}{160}-\frac{\pi ^3}{16 \sqrt{3}} \right)
 = \int_0^1 \frac{\text{{\bf E}}\left(\sqrt{1-x}\right) 
 \text{{\bf K}}^2\left(\frac{\sqrt{4-\sqrt{6} \sqrt{\frac{\sqrt{16 x+9}-3}{x}}}}{2
 \sqrt{2}}\right)}{\sqrt[4]{8 x+3 \sqrt{16 x+9}+9}} \, dx, \label{mainresult5} \\
 & \ \nonumber \\
 & \frac{1}{2^{7/4}} \left( \frac{71 \pi ^2}{60}-\frac{\pi ^3}{8} \right) 
 = \int_0^1 \frac{\text{{\bf E}}\left(\sqrt{1-x}\right) \text{{\bf K}}^2\left(\sqrt{\frac{1}{2}-\frac{1}{4} \sqrt{\frac{\sqrt{8
 x+1}-1}{x}}}\right)}{\sqrt[4]{4 x+\sqrt{8 x+1}+1}} \, dx, \label{mainresult6} \\ 
 & \ \nonumber \\
 & \frac{\pi ^2 \left(143-\frac{20 \pi }{\sqrt{3}}\right)}{480 \sqrt[4]{2}} 
 = \int_0^1 \frac{\text{{\bf E}}\left(\sqrt{1-x}\right) \text{{\bf K}}^{2}\left(\sqrt{\frac{1}{2}-\frac{\sqrt{\frac{\sqrt{48 x+1}-1}{x}}}{4
 \sqrt{6}}}\right)}{\sqrt[4]{24 x+\sqrt{48 x+1}+1}} \, dx, \label{mainresult7} \\ 
 & \ \nonumber \\
 & \frac{\pi ^2 (104- 45 \ln (3))}{180 \sqrt{3}}
 = \int_0^1 \frac{\text{{\bf E}}\left(\sqrt{1-x}\right) \text{{\bf K}}^{2}\left(\frac{\sqrt{3-2 \sqrt{\frac{6-3 \sqrt{4-3
 x}}{x}}}}{\sqrt{6}}\right)}{\sqrt[4]{4 \sqrt{4-3 x}-3 x+8}} \, dx. \label{mainresult8} 
\end{align}
 We also introduce an infinite family of closed-form generalizations of \eqref{mainresult8}. 
 Our method allows us to obtain new evaluations such as 
 \begin{equation}\label{firstwithdE}
 \frac{\pi ^3}{64} (1-4 \ln (2)) = \int_{0}^{1} x \text{{\bf K}}^{2}\left( \sqrt{\frac{1 
 - \sqrt{x}}{2}} \right) \, d \text{{\bf E}}\left( \sqrt{x} \right), 
\end{equation}
 which may be written in an equivalent form so as to again obtain a threefold product of 
 complete elliptic expressions, recalling the differential relation such that $ \frac{d \text{{\bf E}}(k)}{dk} = \frac{ \text{{\bf E}}(k) - 
 \text{{\bf K}}(k) }{k}$. 
 In a similar vein, relative to \eqref{firstwithdE}, our method allows us to prove the following: 
\begin{align}
 & \frac{\pi ^2}{8} \left(3 \ln (\phi )-\frac{\sqrt{5}}{2}-\frac{\pi ^2}{10}\right) 
 = \int_{0}^{1} x 
 \text{{\bf K}}^2\left( \sqrt{\frac{1-\sqrt{\frac{1-x}{4}+1}}{2}} \right) \, 
 d \text{{\bf E}}\left( \sqrt{x} \right), \label{mainresult10} \\
 & \ \nonumber \\
 & 
 \frac{\pi ^2}{4} \left(\frac{\ln \left(1+\sqrt{2}\right)}{ \sqrt{2}}-\frac{13}{15}\right) 
 = \int_0^1 \frac{ x \text{{\bf K}}^{2}\left(\sqrt{\frac{1}{2}-\frac{1}{\sqrt{2} \sqrt{\sqrt{x}+1}}}\right) 
 }{\sqrt{\sqrt{x}+1}} \, d\text{{\bf E}}\left( \sqrt{x} \right), \label{mainresult11} \\
 & \ \nonumber \\
 & \frac{\pi ^2 \left(2 \ln (3)-\frac{152}{45}\right)}{16 \sqrt{3}}
 = \int_0^1 \frac{ x \text{{\bf K}}^{2}\left(\sqrt{\frac{1}{2}-\frac{\sqrt{\frac{\sqrt{3 x+1}-2}{x-1}}}{\sqrt{3}}}\right) 
 }{\sqrt[4]{3 x+4 
 \sqrt{3 x+1}+5}} \, d\text{{\bf E}}\left(\sqrt{x}\right). \label{mainresult12}
\end{align}
 Wan and Zucker \cite{WanZucker2016} 
 introduced a number of remarkable evaluations for 
 integrals that are of the forms suggested via \eqref{mainresult1}--\eqref{mainresult12}, 
 i.e., definite integrals satisfying the following properties: 
\begin{enumerate}
 \item The definite integral is from $0$ to $1$; 

 \item The integrand involves a factor given by a threefold product of complete elliptic integral expressions; and 

\item Any remaining integrand factors are algebraic. 
\end{enumerate}
 The closed-form evaluation of mathematical objects satisfying the above conditions is the main purpose of this article. 
 A remarkable evaluation for 
 an integral of this form 
 was given by Wan and Zucker \cite{WanZucker2016} 
 in the context of research on lattice sums and involves an integrand factor 
 of the form $2 \text{{\bf E}}(k)-\text{{\bf K}}(k)$. 
 We introduce, in Section \ref{subsectionWanZucker}, 
 new closed forms for integrals satisfying the above conditions 
 and involving a factor of the form $2 \text{{\bf E}}(k)-\text{{\bf K}}(k)$, inspired by \cite{WanZucker2016}. 

\section{Clebsch--Gordan theory}
 The CG coefficients are typically defined via the phenomenon of angular momentum coupling. Following \cite{Askey1982}, we express that the CG 
 coefficients that have zero magnetic quantum numbers satisfy the following identity: 
\begin{equation}\label{mainphysics}
 \left( C_{i0b0}^{c0} \right)^{2} = \frac{2c+1}{2} \int_{-1}^{1} P_{i}(x) P_{b}(x) P_{c}(x) \, dx, 
\end{equation}
 letting the orthogonal family of Legendre polynomials 
 be denoted as per usual. A common definition for Legendre polynomials 
 is via the binomial sum indicated as follows: $ P_{n}(x) = \frac{1}{2^{n}} \sum_{k=0}^{n} \binom{n}{k}^{2} (x+1)^{k} (x-1)^{n-k}$. 
 In view of \eqref{mainphysics}, and 
 as in the Zhou article \cite{Zhou2014Legendre} that is the main source of motivation behind our new results, 
 we may define \emph{generalized Clebsch--Gordan integrals} to be of the form 
\begin{equation}\label{generalizedCG}
 \int_{-1}^{1} P_{\mu}(x) P_{\nu}(x) P_{\nu}(-x) \, dx 
\end{equation}
 for $\mu$ and $\nu$ in $\mathbb{C}$, 
 and this terminology is also used in the article \cite{Cantarini2022} relevant to much of our work. 

 The CG coefficients naturally arise in both the decomposition of a product of two spherical harmonics into spherical harmonics and, equivalently, in the 
 decomposition of a product of Legendre polynomials into Legendre polynomials \cite{DongLemus2002}. So, by taking a product of three Legendre polynomials 
 and integrating this product, the CG coefficients naturally arise according to this latter decomposition. 
 From the product formula for $ Y_{\ell_{1}}^{m_{1}}(\theta, \varphi) Y_{\ell_{2}}^{m_{2}}(\theta, 
 \varphi)$ for spherical harmonics in terms of Legendre polynomials, we may write 
\begin{align}
 & P_{\ell_{1}}^{m_{1}}(x) P_{\ell_{2}}^{m_{2}}(x) = \label{prodPP1} \\ 
 & \sqrt{ \frac{ (\ell_{1} + m_{1})! (\ell_{2} + m_{2})! }{ (\ell_{1} - m_{1})! (\ell_{2} - m_{2})! } }
 \sum_{\ell_{12}} \sqrt{ \frac{ (\ell_{12} - m_{12})! }{ (\ell_{12} + m_{12})! } } 
 C_{m_{1}, m_{2}, m_{12}}^{\ell_{1}, \ell_{2}, \ell_{12}} C_{0, 0, 0}^{\ell_{1}, \ell_{2}, \ell_{12}}
 P_{\ell_{12}}^{m_{12}}(x), \label{prodPP2}
\end{align}
 referring to \cite{DongLemus2002} for details. 

 Integrals of threefold products of Legendre polynomials of the form shown in \eqref{generalizedCG} arise in the context of the evaluation of CG 
 coefficients in much the same way as in the classic identity in \eqref{mainphysics}, and hence the appropriateness as to how series and integral 
 evaluations arising from or otherwise directly relating to integrals as in \eqref{mainphysics} and \eqref{generalizedCG} may be referred to as being of CG 
 type, especially in view of the product identity shown in \eqref{prodPP1}--\eqref{prodPP2}. From Fourier--Legendre expansions such as 
\begin{equation}\label{FLofK}
 \text{{\bf K}}\left( \sqrt{x} \right) = \sum_{n=0}^{\infty} \frac{2}{2n+1} P_{n}\left( 2 x - 1 \right), 
\end{equation}
 the integration of products of complete elliptic-type expressions 
 often gives rise to CG coefficients via identities as in \eqref{prodPP1}--\eqref{prodPP2}. 

\section{New applications of semi-integration by parts}
 As indicated above, Zhou's 2014 article \cite{Zhou2014Legendre} is the main inspiration for our current work. This current work is also inspired by 
 many references citing or otherwise related to Zhou's article \cite{Zhou2014Legendre}, including 
 \cite{AusserlechnerGlasser2020,CampbellDAurizioSondow2019,Cantarini2022,GlasserZhou2018,RogersWanZucker2015,WanZucker2016,Zhou2022,Zhou2015,Zhou2017,Zhou2019,Zhou2014On,Zhou2016,Zhou2018}, 
 and these references include a number of articles involving definite integrals from $0$ to $1$ with integrands containing twofold products of complete 
 elliptic expressions \cite{AusserlechnerGlasser2020,CampbellDAurizioSondow2019,Cantarini2022,GlasserZhou2018,Zhou2017,Zhou2019} or threefold such 
 products \cite{RogersWanZucker2015,WanZucker2016,Zhou2014On}. 
 These references add to our interest in the multiple elliptic integrals highlighted in Section \ref{subsectionMotivating}
 and proved in the current Section. 

 A fundamental object in the field of fractional calculus is the \emph{Riemann--Liouville fractional derivative}, which is such that 
\begin{align*}
 D^{\alpha} f(x) & = \frac{d^{n}}{dx^{n}} \left( D^{-(n - \alpha)} f(x) \right) \\ 
 & = \frac{1}{\Gamma(n - \alpha)} \frac{d^{n}}{dx^{n}} \left( \int_{0}^{x} \left( x - t \right)^{n 
 - \alpha - 1} f(t) \, dt \right), 
\end{align*}
 setting $n - 1 \leq \alpha \leq n$ and $n \in \mathbb{N}$. 
 In this regard, and following \cite{Campbell2022,CampbellCantariniDAurizio2022}, the \emph{semi-derivative} operator 
 $D^{1/2}$ satisfies 
\begin{equation}\label{semiderivative}
 D^{1/2} x^{\alpha} = \frac{\Gamma(\alpha + 1)}{\Gamma\left( \alpha + \frac{1}{2} \right)} x^{\alpha - \frac{1}{2}}, 
\end{equation}
 and the \emph{semi-primitive} operator $D^{-1/2}$ satisfies 
\begin{equation}\label{semiprimitive}
 D^{-1/2} x^{\alpha} = \frac{\Gamma(\alpha + 1)}{\Gamma\left( \alpha + \frac{3}{2} \right)} x^{\alpha + \frac{1}{2}}, 
\end{equation}
 and we refer to the operators $D^{\pm 1/2}$ as \emph{Caputo operators}. As described in \cite{CampbellCantariniDAurizio2022}, much of the interest in 
 the techniques in \cite{CampbellCantariniDAurizio2022} involving the operators in \eqref{semiderivative} 
 and \eqref{semiprimitive} 
 may, informally, be regarded as being given by 
 how the application of $D^{\pm 1/2}$ 
 to series involving powers of central binomial coefficients 
 has the effect, by the Legendre duplication formula, of reducing such a power by one, 
 and this is often useful for the purposes of simplifying 
 series containing higher powers of $\binom{2n}{n}$ for $n \in \mathbb{N}_{0}$. 
 This is formalized, in part, in \cite{Campbell2022,CampbellCantariniDAurizio2022} with the following transformation. 

\begin{theorem}\label{SIBPtheorem}
 (Semi-integration by parts): The equality 
\begin{equation}\label{displayedSIBP}
 \langle f, g \rangle = \left\langle \left( D^{1/2} \tau \right) f, 
 \left( \tau D^{-1/2} \right) g \right\rangle 
\end{equation}
 holds true if both sides are well-defined, 
 and where $\tau$ maps a function $h(x)$ to $h(1-x)$ \cite{Campbell2022,CampbellCantariniDAurizio2022}. 
\end{theorem}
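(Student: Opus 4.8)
The plan is to read $\langle f,g\rangle$ as the $L^2(0,1)$ pairing $\int_0^1 f(x)g(x)\,dx$ and to recast the semi-operators as Riemann--Liouville objects anchored at the endpoints of $[0,1]$. Writing $I_{0+}^{1/2}h(x)=\frac{1}{\Gamma(1/2)}\int_0^x (x-t)^{-1/2}h(t)\,dt$, the Euler beta integral reproduces \eqref{semiprimitive} on each power $x^\alpha$, so I would identify $D^{-1/2}=I_{0+}^{1/2}$ and $D^{1/2}=\frac{d}{dx}I_{0+}^{1/2}$ (one checks directly that this differentiated form returns \eqref{semiderivative}). Alongside these I would introduce the reflected, right-anchored operators $I_{1-}^{1/2}h(x)=\frac{1}{\Gamma(1/2)}\int_x^1 (t-x)^{-1/2}h(t)\,dt$ and $D_{1-}^{1/2}=-\frac{d}{dx}I_{1-}^{1/2}$, which are exactly the objects that the reflection $\tau$ produces.

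The argument then rests on two elementary lemmas. First, a reflection identity: the substitution $t\mapsto 1-t$ inside $I_{0+}^{1/2}$ gives $\tau I_{0+}^{1/2}\tau=I_{1-}^{1/2}$, equivalently $I_{0+}^{1/2}\tau=\tau I_{1-}^{1/2}$; differentiating and using $\frac{d}{dx}\tau=-\tau\frac{d}{dx}$ together with $\frac{d}{dx}I_{1-}^{1/2}=-D_{1-}^{1/2}$ upgrades this to $D^{1/2}\tau=\tau D_{1-}^{1/2}$. Second, an adjointness (fractional integration by parts) identity: applying Fubini's theorem on the triangle $0\le t\le x\le 1$ yields $\langle I_{0+}^{1/2}\phi,\psi\rangle=\langle \phi, I_{1-}^{1/2}\psi\rangle$.

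With these in hand the identity unwinds in a few strokes. Using $D^{1/2}\tau=\tau D_{1-}^{1/2}$ and $\tau D^{-1/2}=\tau I_{0+}^{1/2}$ gives $\langle (D^{1/2}\tau)f,(\tau D^{-1/2})g\rangle=\langle \tau D_{1-}^{1/2}f,\ \tau I_{0+}^{1/2}g\rangle$; since $\tau$ is the involutive isometry $x\mapsto 1-x$ of $L^2(0,1)$, the two copies of $\tau$ cancel and this equals $\langle D_{1-}^{1/2}f, I_{0+}^{1/2}g\rangle$. The adjointness lemma then moves $I_{0+}^{1/2}$ onto the other factor, producing $\langle I_{1-}^{1/2}D_{1-}^{1/2}f, g\rangle$, and the fundamental theorem of fractional calculus collapses the composition, $I_{1-}^{1/2}D_{1-}^{1/2}f=f$, leaving $\langle f,g\rangle$ as required.

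I expect the only genuine difficulty to live entirely inside the hypothesis ``if both sides are well-defined,'' i.e.\ in the function classes. The delicate link is the collapse $I_{1-}^{1/2}D_{1-}^{1/2}f=f$: the Riemann--Liouville inversion in general leaves an endpoint remainder proportional to $(1-x)^{-1/2}\,[I_{1-}^{1/2}f](1^-)$, which vanishes precisely when $f$ is, say, bounded and integrable near the endpoint (more generally when $f$ lies in a suitable absolutely continuous or H\"older class). The same boundedness is what licenses the interchange of integration in the adjointness lemma and ensures the boundary contributions from differentiating through $\tau$ drop out. I would therefore state and apply the transformation for $f$ and $g$ in a class on which $D^{\pm1/2}$ act as genuine mutual inverses on $[0,1]$ and all the displayed integrals converge; verifying that the elliptic integrands of Section~\ref{subsectionMotivating} fall into such a class is then a separate, case-by-case matter.
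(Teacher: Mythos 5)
Your proof is correct, but it takes a genuinely different route from the paper's. The paper never proves Theorem \ref{SIBPtheorem} in-text at all: the identity is imported from \cite{Campbell2022,CampbellCantariniDAurizio2022}, and the proof style used there --- mirrored in this paper's own proof of the analogous Theorem \ref{SIBPvariant} --- is computational: reduce by linearity to pure powers (for \eqref{displayedSIBP}, test $f=(1-x)^{\ell}$ against $g=x^{m}$), apply the diagonal actions \eqref{semiderivative}--\eqref{semiprimitive}, and observe that both sides collapse to the same beta integral, with all analytic issues absorbed into a term-by-term-summation hypothesis. You instead realize $D^{-1/2}=I_{0+}^{1/2}$ and $D^{1/2}=\frac{d}{dx}I_{0+}^{1/2}$ as honest Riemann--Liouville operators, conjugate by $\tau$ to obtain the right-anchored operators, and conclude via the Fubini adjointness $\langle I_{0+}^{1/2}\phi,\psi\rangle=\langle\phi,I_{1-}^{1/2}\psi\rangle$ together with the inversion $I_{1-}^{1/2}D_{1-}^{1/2}f=f$; each of your intermediate identities (the reflection identity, the sign rule $\frac{d}{dx}\tau=-\tau\frac{d}{dx}$, the adjointness, and the form of the endpoint remainder) checks out. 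What your route buys is precision about hypotheses, and this matters more than you may realize: as stated, with only ``both sides are well-defined,'' the theorem is literally false, and your endpoint remainder is exactly the obstruction. Take $f(x)=(1-x)^{-1/2}$ and $g\equiv 1$: then $\langle f,g\rangle=2$, yet $(D^{1/2}\tau)f=\frac{d}{dx}\,I_{0+}^{1/2}\,x^{-1/2}=\frac{d}{dx}\sqrt{\pi}=0$ (equivalently, \eqref{semiderivative} gives the prefactor $\Gamma(\tfrac12)/\Gamma(0)=0$), so the right-hand side of \eqref{displayedSIBP} equals $0$ although both sides are perfectly well-defined; correspondingly $[I_{1-}^{1/2}f](1^{-})=\sqrt{\pi}\neq 0$, so your remainder term survives and the inversion $I_{1-}^{1/2}D_{1-}^{1/2}f=f$ fails. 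The monomial/beta-integral route never detects this because the test functions it uses happen to lie in the good class. So your function-class caveat is not optional bookkeeping but a necessary repair of the statement, and your argument has the added advantage of identifying the sharp condition under which \eqref{displayedSIBP} actually holds.
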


 From the Maclaurin series expansions 
\begin{equation*}
 \text{{\bf K}}(k) = \frac{\pi}{2} \, {}_{2}F_{1}\!\!\left[ 
 \begin{matrix} 
 \frac{1}{2}, \frac{1}{2} \vspace{1mm}\\  1 
 \end{matrix} \ \Bigg| \ k^2 \right] 
\end{equation*}
 and 
\begin{equation}\label{EMaclaurin}
 \text{{\bf E}}(k) = \frac{\pi}{2} \, {}_{2}F_{1}\!\!\left[ 
 \begin{matrix}
 \frac{1}{2}, -\frac{1}{2} \vspace{1mm}\\ 1 
 \end{matrix} \ \Bigg| \ k^2 \right], 
\end{equation}
 the term-by-term application of the Caputo operators  to the power series for $\text{{\bf K}}(\sqrt{k})$  and $\text{{\bf E}}(\sqrt{k})$ yields 
 elementary functions,  and, as explored in \cite{Campbell2022}, this is often useful in the evaluation and generalizations of integrals of CG form  as in 
 \eqref{maintwofold}.  In view of the power series expansions 
\begin{equation}\label{cubedcentralbinomial}
 \sum _{n=0}^{\infty} \binom{2 n}{n}^3 x^n
 = \frac{4 \text{{\bf K}}^{2}\left(\frac{\sqrt{1-\sqrt{1-64 x}}}{\sqrt{2}}\right)}{\pi ^2} 
\end{equation}
 and 
\begin{equation}\label{relatedcubedbinomial}
 \sum _{n=0}^{\infty} \binom{2 n}{n}^2 \binom{4 n}{2 n} x^n
 = \frac{4 \sqrt{2} \text{{\bf K}}^{2}\left(\sqrt{\frac{1}{2}-\frac{\sqrt{\frac{1-\sqrt{1-256 x}}{x}}}{16 \sqrt{2}}}\right)}{\pi ^2 \sqrt[4]{-256 x+2
 \sqrt{1-256 x}+2}}, 
\end{equation}
 we are led to consider something of an ``opposite'' strategy relative to \cite{CampbellCantariniDAurizio2022}, in the following sense: Instead of using 
 fractional derivatives to attempt to decrease the power of a central 
 binomial coefficient in a given series, we instead want to \emph{increase} 
 the power of $\binom{2n}{n}$, again with the use 
 of fractional derivatives, and by direct analogy with Theorem \ref{SIBPtheorem}. 
 We formalize this idea with Theorem \ref{SIBPvariant} below. 

\begin{theorem}\label{SIBPvariant}
 (A variant of SIBP for analytic functions): 
 For sequences $({a}_{n} : n \in \mathbb{N}_{0})$ and $({b}_{n} : n \in \mathbb{N}_{0})$, 
 we write ${f}(x) = \sum_{n=0}^{\infty} x^{n + \frac{1}{2}} {a}_{n} $ 
 and $\mathfrak{g}(x) = \sum_{n=0}^{\infty} x^{n+ \frac{1}{2}} {b}_{n}$. 
 The inner product 
\begin{equation}\label{displayinner1}
 \langle {f}(x), \mathfrak{g}(1-x) \rangle
\end{equation}
 may then be written as 
\begin{equation}\label{displayinner2}
 \left\langle \sum_{n=0}^{\infty} 
 \frac{\Gamma\left( n+\frac{3}{2} \right)}{\Gamma(n+1)} (1-x)^n {a}_{n}, 
 \sum_{n=0}^{\infty} \frac{\Gamma\left( n+\frac{3}{2} \right)}{\Gamma\left( n+2 \right)} 
 x^{n+1} {b}_{n} \right\rangle, 
\end{equation}
 under the assumption that applications of $\langle \cdot, \cdot \rangle$ and infinite summation 
 may be reversed in \eqref{displayinner1} and \eqref{displayinner2} (cf.\ \cite{Campbell2022}). 
\end{theorem}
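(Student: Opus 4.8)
The plan is to read $\langle\cdot,\cdot\rangle$ as the inner product $\langle u,v\rangle=\int_0^1 u(x)\,v(x)\,dx$ and to reduce both \eqref{displayinner1} and \eqref{displayinner2} to one and the same double series indexed by the two summation variables arising from $f$ and $\mathfrak{g}$. First I would expand the left-hand inner product \eqref{displayinner1} by writing $\mathfrak{g}(1-x)=\sum_{m=0}^{\infty}(1-x)^{m+\frac12}b_m$ and $f(x)=\sum_{n=0}^{\infty}x^{n+\frac12}a_n$, and then, invoking the assumed legitimacy of interchanging summation with $\int_0^1$, rewrite \eqref{displayinner1} as
\[
 \sum_{n=0}^{\infty}\sum_{m=0}^{\infty}a_n b_m\int_0^1 x^{n+\frac12}(1-x)^{m+\frac12}\,dx.
\]
Each inner integral is a Beta integral, so it equals $B\!\left(n+\tfrac32,\,m+\tfrac32\right)=\frac{\Gamma\left(n+\frac32\right)\Gamma\left(m+\frac32\right)}{\Gamma(n+m+3)}$, which reduces the left-hand side to the double series $\sum_{n,m}a_n b_m\frac{\Gamma\left(n+\frac32\right)\Gamma\left(m+\frac32\right)}{\Gamma(n+m+3)}$.

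Before turning to the right-hand side, I would record the Caputo-operator interpretation that both guides the computation and links the statement to Theorem \ref{SIBPtheorem}. By \eqref{semiderivative}, applying $D^{1/2}$ termwise to $f$ gives $D^{1/2}f(x)=\sum_{n=0}^{\infty}\frac{\Gamma\left(n+\frac32\right)}{\Gamma(n+1)}x^n a_n$, so that the first argument in \eqref{displayinner2} is exactly $\left(\tau D^{1/2}\right)f$, where $\tau$ is the reflection $x\mapsto 1-x$ from Theorem \ref{SIBPtheorem}. Likewise, by \eqref{semiprimitive}, applying $D^{-1/2}$ termwise to $\mathfrak{g}$ gives $D^{-1/2}\mathfrak{g}(x)=\sum_{n=0}^{\infty}\frac{\Gamma\left(n+\frac32\right)}{\Gamma(n+2)}x^{n+1}b_n$, which is precisely the second argument in \eqref{displayinner2}. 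Thus \eqref{displayinner2} is the inner product $\left\langle\left(\tau D^{1/2}\right)f,\,D^{-1/2}\mathfrak{g}\right\rangle$, the promised variant of \eqref{displayedSIBP}.

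It then remains to expand \eqref{displayinner2} in the same fashion. Interchanging summation and integration once more, \eqref{displayinner2} becomes
\[
 \sum_{n=0}^{\infty}\sum_{m=0}^{\infty}a_n b_m\,\frac{\Gamma\left(n+\frac32\right)}{\Gamma(n+1)}\,\frac{\Gamma\left(m+\frac32\right)}{\Gamma(m+2)}\int_0^1 (1-x)^n x^{m+1}\,dx,
\]
and the remaining Beta integral equals $B\!\left(m+2,\,n+1\right)=\frac{\Gamma(m+2)\,\Gamma(n+1)}{\Gamma(n+m+3)}$. The factors $\Gamma(n+1)$ and $\Gamma(m+2)$ then cancel against their reciprocals, collapsing \eqref{displayinner2} to $\sum_{n,m}a_n b_m\frac{\Gamma\left(n+\frac32\right)\Gamma\left(m+\frac32\right)}{\Gamma(n+m+3)}$, which coincides termwise with the reduced form of \eqref{displayinner1}. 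This establishes the claimed equality.

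The only genuine obstacle is the justification of the two termwise interchanges of summation and integration, and this is exactly what the theorem assumes when it requires that applications of $\langle\cdot,\cdot\rangle$ and infinite summation may be reversed; granting this hypothesis, the argument is the routine Beta-integral bookkeeping above. I do not expect to deduce the statement from Theorem \ref{SIBPtheorem} directly, since here the semi-derivative $D^{1/2}$ acts on $f$ \emph{before} reflection, whereas in \eqref{displayedSIBP} the composite $D^{1/2}\tau$ applies $D^{1/2}$ \emph{after} reflection; expanding $(1-x)^{n+\frac12}$ in powers of $x$ would be needed to bridge the two, so the identities are best proved in parallel by the Beta-function route rather than derived one from the other.
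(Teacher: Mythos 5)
Your proposal is correct and is essentially the paper's own argument: the paper reduces, under the same interchange hypothesis, to the Kronecker-delta cases $a_n=\delta_{n,\ell}$, $b_n=\delta_{n,m}$ and checks the resulting identity via the $\Gamma$-function evaluation of the beta integral, which is exactly your termwise comparison of the two double series. Your additional observation identifying \eqref{displayinner2} with $\bigl\langle (\tau D^{1/2})f,\, D^{-1/2}\mathfrak{g} \bigr\rangle$ is a correct and harmless supplement, but the core proof coincides with the paper's.
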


\begin{proof}
 Under the given commutativity assumption, 
 it remains to consider the cases whereby $a_{n} = \delta_{n, \ell}$
 and $b_{n} = \delta_{n, m}$ for fixed $\ell, m \in \mathbb{N}_{0}$, letting the Kronecker delta 
 symbol be denoted as per usual. So, it remains to prove that 
 $$ \int_0^1 x^{\ell+\frac{1}{2}} (1-x)^{m+\frac{1}{2}} \, dx 
 = \int_{0}^{1} \left( \frac{(1-x)^\ell \Gamma \left(\ell+\frac{3}{2}\right)}{\Gamma (\ell+1)} \right) 
 \left( \frac{x^{m+1} \Gamma \left(m+\frac{3}{2}\right)}{\Gamma (m+2)} \right) \, dx. $$
 So, from the $\Gamma$-function evaluation of the beta integral, the desired result then immediately holds. 
\end{proof}

 Typically, to verify the required interchanges of limiting operations 
 specified in Theorem \ref{SIBPvariant}, one may employ 
 basic results in real analysis concerning term-by-term integration of infinite series \cite[\S5.3]{Bressoud2007}; 
 for the purposes of 
 our applications in Section \ref{subsectionApplications}, the required term-by-term integrations 
 may be justified in a routine way, 
 but obtaining closed forms from \eqref{displayinner1}
 can be quite involved and may require some degree of ingenuinty
 in the rare cases whereby the $a$- and $b$-sequences
 are such that \eqref{displayinner2}
 is expressible with a threefold product of complete elliptic integrals; this is clarified in Section \ref{subsectionApplications} below. 

\subsection{Applications}\label{subsectionApplications}
 The integral expressions given below are referred to as \emph{multiple elliptic integrals in CG form} by Zhou in \cite{Zhou2014Legendre}, and the following 
 evaluations due to Zhou \cite{Zhou2014Legendre} are highlighted as part of Corollary 2.2 in \cite{Zhou2014Legendre}. The following integrals in CG form 
 all involve threefold products of complete elliptic integrals, as in our new results listed in Section \ref{subsectionMotivating}, 
 and the below formulas due to Zhou, along with many results from Zhou of a similar quality, 
 are main sources of inspiration concerning our results as in 
 Section \ref{subsectionMotivating}: 
\begin{align}
 & 4 \int_{0}^{1} 
 \frac{ (1-t) \text{{\bf K}}^{2}\left( \sqrt{1-t} \right) \text{{\bf K}}\left( \sqrt{t} \right) }{ 
 (1 + t)^{3/2} } \, dt = \frac{ \Gamma^{2}\left( \frac{1}{8} \right) 
 \Gamma^{2}\left( \frac{3}{8} \right) }{24}, \label{nonintro1} \\
 & \frac{27}{4} \int_{0}^{1} 
 \frac{ t(1 - t) \text{{\bf K}}^{2}\left( \sqrt{1 - t} \right) \text{{\bf K}}\left( \sqrt{t}
 \right) }{ (1 - t + t^2)^{7/4} } \, dt
 = \frac{ \Gamma^{4}\left( \frac{1}{4} \right) }{ 8 \sqrt{2 \sqrt{3}}}. \label{nonintro2}
\end{align}
 Our present work is also inspired by Zhou's proof \cite{Zhou2014On} of the conjectured formula 
\begin{equation}\label{nonintro3}
 \int_{0}^{1} \frac{ \text{{\bf K}}^{2}\left( \sqrt{1 - k^2} \right) \text{{\bf K}}(k) }{\sqrt{k} 
 \left( 1 - k^2 \right)^{3/4}} \, dk 
 = \frac{ \Gamma^{8}\left( \frac{1}{4} \right) }{32 \sqrt{2} \pi^2} 
\end{equation}
 discovered experimentally by Rogers et al.\ \cite{RogersWanZucker2015}
 in the context of the study of formulas as in 
\begin{equation}\label{nonintro4}
 \int_{0}^{1} \frac{ \text{{\bf K}}^{3}\left( \sqrt{1 - k^2} \right) }{ \sqrt{k} 
 \left( 1 - k^2 \right)^{3/4} } \, dk = \frac{3 \Gamma^{8}\left( \frac{1}{4} \right) }{32 \sqrt{2} \pi^2}, 
\end{equation}
 as introduced in \cite{RogersWanZucker2015}.  The above results due to Zhou et al.\ as in \eqref{nonintro1}--\eqref{nonintro4}  motivate our first 
 Corollary to the SIBP variant formulated in Theorem \ref{SIBPvariant}, as below,  in view of our discussions in Section \ref{sectionIntroduction}.  As we 
 are to later explain,  the three integral formulas highlighted in the following Corollary are  ``special'' and ``non-arbitrary'' in the sense that  these 
 specific formulas depend on the very few known closed forms for the dilogarithm function. 

\begin{corollary}\label{corollaryfirstthree}
 The CG-type integral evaluations in \eqref{mainresult1}--\eqref{mainresult3} hold true. 
\end{corollary}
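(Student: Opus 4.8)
The plan is to run Theorem~\ref{SIBPvariant} in reverse: I read each of \eqref{mainresult1}--\eqref{mainresult3} as the \emph{transformed} inner product \eqref{displayinner2} and then evaluate the untransformed inner product \eqref{displayinner1}, which will collapse to a \emph{onefold} elliptic integral. Each target has the shape $\langle \text{{\bf E}}(\sqrt{1-x}),\text{{\bf K}}^2(\,\cdot\,)\rangle$, where $\text{{\bf E}}(\sqrt{1-x})$ is analytic in $1-x$ while, by \eqref{cubedcentralbinomial}, the relevant $\text{{\bf K}}^2$-factor is analytic in $x$. I would therefore match the first slot of \eqref{displayinner2} to $\text{{\bf E}}(\sqrt{1-x})$ and the second slot to the $\text{{\bf K}}^2$-factor. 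Comparing \eqref{EMaclaurin} with the first slot fixes $a_n$: using $(3/2)_n=(2n+1)(1/2)_n$ and $(-1/2)_n=-(1/2)_n/(2n-1)$, one gets $a_n=-\sqrt{\pi}\,\binom{2n}{n}4^{-n}/\big((2n+1)(2n-1)\big)$, so that $f$ is elementary. Because the second slot of \eqref{displayinner2} vanishes at $x=0$ whereas $\text{{\bf K}}^2$ does not, I would first peel off the constant term $\tfrac{\pi^2}{4}$, using the Gauss-summable value $\tfrac{\pi^2}{4}\int_0^1 \text{{\bf E}}(\sqrt{1-x})\,dx=\tfrac{\pi^2}{4}\cdot\tfrac{4}{3}=\tfrac{\pi^2}{3}$, and match the remainder $\text{{\bf K}}^2(\,\cdot\,)-\tfrac{\pi^2}{4}$ to the second slot.

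The decisive step is the determination of $b_n$. All three $\text{{\bf K}}^2$-arguments are instances of the argument in \eqref{cubedcentralbinomial} after a rescaling $x\mapsto sx$, with $s=\tfrac{1}{64},\tfrac{1}{128},-\tfrac{1}{256}$ for \eqref{mainresult1}, \eqref{mainresult2}, \eqref{mainresult3} respectively, so the second-slot coefficients are proportional to $\binom{2n}{n}^3 s^{\,n}$. Here the ``opposite'' mechanism advertised before Theorem~\ref{SIBPvariant} takes effect: the ratio $\Gamma(n+2)/\Gamma(n+3/2)$ carried by the second slot strips off, via the Legendre duplication formula, exactly one factor of $\binom{2n}{n}$, turning $\binom{2n}{n}^3$ into $\binom{2n}{n}^2$. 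Resumming the resulting square of central binomial coefficients then reproduces a \emph{single} $\text{{\bf K}}$, and I expect to obtain $\mathfrak{g}(x)=\tfrac{\pi^{3/2}}{4\sqrt{x}}\big(\tfrac{2}{\pi}\text{{\bf K}}(8\sqrt{sx})-1\big)$. Thus the threefold product on the right collapses, under \eqref{displayinner1}, to
\[
 \int_0^1 f(x)\,\mathfrak{g}(1-x)\,dx=-\frac{\pi}{2}\int_0^1 \frac{\sqrt{x}\,F(x)}{\sqrt{1-x}}\,\text{{\bf K}}\!\left(8\sqrt{s(1-x)}\right)dx+\frac{\pi^2}{4}\int_0^1 \frac{\sqrt{x}\,F(x)}{\sqrt{1-x}}\,dx,
\]
where $F(x)=\sum_{n\ge 0}\binom{2n}{n}x^n/\big(4^n(2n+1)(2n-1)\big)$ is elementary.

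The second integral above is elementary, and together with the peeled-off $\tfrac{\pi^2}{3}$ it supplies the algebraic and pure $\pi$-power part of each closed form; the first, genuinely elliptic, integral supplies the remaining (dilogarithmic) part. I would evaluate this elliptic integral by a Fubini interchange, replacing $\text{{\bf K}}$ by its defining $\theta$-integral \eqref{Kdefinition}, carrying out the inner algebraic integration over $x$ in closed form, and then integrating over $\theta$. This last reduction is the main obstacle, and it is precisely what makes these three evaluations ``special'' and ``non-arbitrary'': the outer integral lands on the dilogarithm, and a closed form emerges only at the scarce arguments where $\mathrm{Li}_2$ is known in closed form---$\mathrm{Li}_2(\tfrac12)$, producing the $\ln 2$ in \eqref{mainresult1}; the imaginary argument producing $G$ in \eqref{mainresult2}; and the golden-ratio values producing $\ln\phi$, $\sqrt{5}$, and $\tfrac{\pi^2}{20}$ in \eqref{mainresult3}. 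For \eqref{mainresult3} the choice $s=-\tfrac{1}{256}$ renders $8\sqrt{s(1-x)}$ imaginary, so I would first invoke the imaginary-modulus transformation of $\text{{\bf K}}$ to return to a real modulus before performing the dilogarithmic evaluation. Assembling the pieces then yields the three stated closed forms.
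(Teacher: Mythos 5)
Your proposal is correct, and while it runs Theorem~\ref{SIBPvariant} in the same direction as the paper (threefold product on the \eqref{displayinner2} side, evaluation on the \eqref{displayinner1} side), it splits the integrand between the two slots differently, which leads to a genuinely different evaluation. The paper takes $a_n = 4^{-n}\binom{2n}{n}$ and $b_n = \frac{16^{-n}(n+1)\binom{2n}{n}^{2}\alpha^{n}}{2n+1}$, so that the first slot is $\text{{\bf E}}\left(\sqrt{1-x}\right)/(\sqrt{\pi}\,x)$ and the second is a multiple of $x\,\text{{\bf K}}^{2}(\cdot)$: the powers $x^{\mp 1}$ cancel inside the inner product (so no peeling of $\tfrac{\pi^2}{4}$ is needed), $f(x)=\sqrt{x/(1-x)}$ is purely algebraic, and \eqref{displayinner1} becomes $\sqrt{x}$ times a ${}_{3}F_{2}$-combination of argument $1-x$, which is integrated term by term (Beta integrals) into a single series $\sum_n R(n)\binom{2n}{n}4^{-n}$ with $R$ rational, finished by classical central-binomial series. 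You instead match the slots to $\text{{\bf E}}\left(\sqrt{1-x}\right)$ exactly and to $\text{{\bf K}}^{2}(\cdot)-\tfrac{\pi^2}{4}$; your $a_n$, the peeled constant $\tfrac{\pi^2}{3}$, the values $s=\tfrac{1}{64},\tfrac{1}{128},-\tfrac{1}{256}$, and $\mathfrak{g}(x)=\frac{\pi^{3/2}}{4\sqrt{x}}\left(\frac{2}{\pi}\text{{\bf K}}\left(8\sqrt{sx}\right)-1\right)$ all check out, so the untransformed side is genuinely a onefold elliptic integral with elementary weight. What your route buys is transparency: the ``threefold collapses to onefold'' mechanism is explicit, and the three closed forms are tied to $\int_0^{c}\frac{\arcsin t}{t}\,dt$ at $c=8\sqrt{s}=1,\ \tfrac{1}{\sqrt{2}},\ \tfrac{i}{2}$, exactly where $\ln 2$, $G$, and the golden-ratio constants live. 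Indeed, for \eqref{mainresult1} your identity collapses to $\frac{\pi}{4}\int_0^1\left(\sqrt{x}+\frac{\arcsin\sqrt{x}}{\sqrt{1-x}}\right)\text{{\bf K}}\left(\sqrt{1-x}\right)dx$ (your elementary integral equals $-\tfrac{\pi^2}{3}$ and cancels the peeled constant), whose two pieces evaluate to $\tfrac{\pi^2}{8}$ and $\tfrac{\pi^2\ln 2}{2}$, giving $\tfrac{\pi^3(1+4\ln 2)}{32}$ as required; what the paper's route buys is that everything stays at the level of series bookkeeping, with no Fubini step, no peeling, and no imaginary modulus to handle in \eqref{mainresult3}. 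One correction to your sketch, harmless to the outcome: the inner $x$-integration in your Fubini step is not ``algebraic'' and has no elementary closed form, because your weight contains $\arcsin\sqrt{x}$; with $b=8\sqrt{s}\sin\theta$ it already produces dilogarithmic values, namely $\int_0^1\frac{\arccos u}{\sqrt{1-b^2u^2}}\,du=\frac{1}{2b}\left(\text{Li}_2(b)-\text{Li}_2(-b)\right)$, and it is the outer $\theta$-integration that resums these into $\frac{1}{c}\int_0^{c}\frac{\arcsin t}{t}\,dt$; the dilogarithm thus enters one step earlier than you state, but the endpoint is the same.
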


\begin{proof}
 We begin by setting $a_{n} = 4^{-n} \binom{2 n}{n}$ and  $b_{n} = \frac{16^{-n} (n+1) \binom{2 n}{n}^2}{2 n+1}$ in our SIBP variant given as 
 Theorem \ref{SIBPvariant}.  An application of Theorem \ref{SIBPvariant}, according to the specified input parameters,  then gives us the equality of 
\begin{align*}
 \int_{0}^{1} \frac{1}{12} \sqrt{x} \Bigg( 12 & \, \, {}_{3}F_{2}\!\!\left[ 
 \begin{matrix} 
 \frac{1}{2}, \frac{1}{2}, \frac{1}{2} \vspace{1mm}\\ 
 1, \frac{3}{2}
 \end{matrix} \ \Bigg| \ 1 - x \right] + \, {}_{3}F_{2}\!\!\left[ 
 \begin{matrix} 
 \frac{3}{2}, \frac{3}{2}, \frac{3}{2} \vspace{1mm}\\ 
 2, \frac{5}{2}
 \end{matrix} \ \Bigg| \ 1 - x \right] - \\
 & x \, {}_{3}F_{2}\!\!\left[ 
 \begin{matrix} 
 \frac{3}{2}, \frac{3}{2}, \frac{3}{2} \vspace{1mm}\\ 
 2, \frac{5}{2}
 \end{matrix} \ \Bigg| \ 1 - x \right] \Bigg) \, dx
\end{align*}
 and 
\begin{equation}\label{firstCGscalar}
 \int_0^1 \frac{2 \text{{\bf E}}\left(\sqrt{1-x}\right) \text{{\bf K}}^{2}\left(\sqrt{\frac{1}{2}-\frac{\sqrt{1-x}}{2}}\right)}{\pi ^2} \, dx, 
\end{equation}
 where the Maclaurin series in \eqref{EMaclaurin} and \eqref{cubedcentralbinomial} have been applied to obtain, via Theorem \ref{SIBPvariant}, the 
 integral in \eqref{firstCGscalar}. Applying term-by-term integration to the above expression involving ${}_{3}F_{2}$-series, this gives us that 
 \eqref{firstCGscalar} is equivalent to $$ \sum _{n=0}^{\infty } \left(\frac{1}{(2 n+1)^2}-\frac{1}{2 (2 n+1)} - \frac{1}{(2 n+3)^2}+\frac{3}{2 (2 
 n+3)}-\frac{1}{2 n+5}\right) \binom{2 n}{n} 2^{-2 n}. $$ Expanding the above summand, the resultant series are classically known, giving us a closed 
 form for \eqref{firstCGscalar} equivalent to \eqref{mainresult1}. The same approach as above, as applied to the cases such that $a_{n} = 4^{-n} 
 \binom{2 n}{n}$ and $b_{n} = \frac{ (n+1) \binom{2 n}{n}^2 \left( -\frac{1}{64} \right)^n}{2 n+1}$, may be used to prove the CG-type integral 
 evaluation in \eqref{mainresult3}. Similarly, setting $a_{n} = 4^{-n} \binom{2 n}{n}$ and $b_{n} = \frac{ (n+1) \binom{2 n}{n}^2 \left( \frac{1}{32} 
 \right)^n}{2 n+1}$ may be used to prove \eqref{mainresult2}. 
\end{proof}

 Setting $a_{n} = 4^{-n} \binom{2 n}{n}$ and $b_{n} = \frac{16^{-n} (n+1) \binom{2 n}{n}^2 \alpha ^n}{2 n+1}$
 for a free parameter $\alpha$ in Theorem \ref{SIBPvariant}, by mimicking our proof of 
 Corollary \ref{corollaryfirstthree}, 
 we can show that 
\begin{equation}\label{firstinfinite}
 \int_0^1 \text{{\bf E}}\left(\sqrt{1-x}\right) \text{{\bf K}}^{2}\left(\sqrt{\frac{1}{2}-\frac{1}{2} \sqrt{1-\alpha x}}\right) \, dx 
\end{equation}
 may be expressed as a combination of elementary functions, closed forms, together with the two-term dilogarithm combination 
\begin{equation}\label{twotermLi2}
 \text{Li}_2\left(-\sqrt{1-\alpha }-\sqrt{-\alpha }\right)-\text{Li}_2\left(-\sqrt{1-\alpha }-\sqrt{-\alpha }+1\right). 
\end{equation}
 So, the CG-type integral in \eqref{firstinfinite} admits a closed form if and only if the $\text{Li}_{2}$-combination in \eqref{twotermLi2} admits a closed 
 form. We have established a new connection between integrals related to CG theory and the closed-form evaluation of dilogarithmic expressions, and 
 it seems that there is not much known about connections of this form. Furthermore, past research articles exploring the closed-form evaluation of 
 two-term dilogarithm combinations as in \cite{Campbell2021Some,Khoi2014,Lima2012,Lima2017,Stewart2022} motivate our interest in the relationship 
 between \eqref{firstinfinite} and \eqref{twotermLi2} that we have introduced. By systematically setting each of the arguments of the two 
 $\text{Li}_{2}$-expressions in \eqref{twotermLi2} to be equal to the eight known real values $x$ such that both $\text{Li}_{2}(x)$ and $x$ admit 
 closed forms \cite[pp.\ 4, 6--7]{Lewin1981}, the only $\alpha$-value that yields a closed form with real arguments in \eqref{twotermLi2} is $\alpha 
 = -\frac{1}{4}$. A similar argument may be used to explain the uniqueness for the $\alpha = \frac{1}{2}$ case. A similar uniqueness property may be 
 applied to the results highlighted in Corollary \ref{corollarynextthree} below, as we later explain. 

\begin{corollary}\label{corollarynextthree}
 The CG-type integral evaluations in \eqref{mainresult4}--\eqref{mainresult7} hold true. 
\end{corollary}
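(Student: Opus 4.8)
The plan is to carry out, \emph{mutatis mutandis}, the argument used for Corollary \ref{corollaryfirstthree}, retaining $a_{n}=4^{-n}\binom{2n}{n}$ but replacing the square $\binom{2n}{n}^{2}$ in the $b$-sequence by the mixed product $\binom{2n}{n}\binom{4n}{2n}$, so that the generating function \eqref{relatedcubedbinomial} plays the role that \eqref{cubedcentralbinomial} played before. Explicitly, I would invoke Theorem \ref{SIBPvariant} with
\[
 a_{n}=4^{-n}\binom{2n}{n}, \qquad b_{n}=\frac{(n+1)\binom{2n}{n}\binom{4n}{2n}}{2n+1}\,\gamma^{n},
\]
taking $\gamma=\tfrac{1}{64}$, $\gamma=-\tfrac{1}{36}$, $\gamma=-\tfrac{1}{8}$, and $\gamma=-\tfrac{3}{4}$ to obtain \eqref{mainresult4}, \eqref{mainresult5}, \eqref{mainresult6}, and \eqref{mainresult7}, respectively. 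Because $a_{n}$ is unchanged, the first entry of the transformed inner product \eqref{displayinner2} again collapses, via the Legendre duplication formula, to a multiple of $\frac{\text{{\bf E}}(\sqrt{1-x})}{x}$; and the factor $\frac{n+1}{2n+1}$ built into $b_{n}$ is chosen precisely so that $\frac{\Gamma(n+3/2)}{\Gamma(n+2)}b_{n}$ simplifies to a constant multiple of $\binom{2n}{n}^{2}\binom{4n}{2n}(\gamma/4)^{n}$, whence the second entry of \eqref{displayinner2} becomes a scalar times $x\sum_{n}\binom{2n}{n}^{2}\binom{4n}{2n}(\gamma x/4)^{n}$. Identifying the latter sum through \eqref{relatedcubedbinomial} with $z=\gamma x/4$ produces the $\text{{\bf K}}^{2}$-over-quartic-radical factor, and it is a routine (if tedious) algebraic simplification to check that the modulus of $\text{{\bf K}}$ and the radical so obtained coincide, up to constant prefactors absorbed into the scalar, with those displayed in \eqref{mainresult4}--\eqref{mainresult7}.

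It then remains to evaluate the left-hand inner product \eqref{displayinner1}, namely $\int_{0}^{1}\sqrt{x}\sum_{n}b_{n}(1-x)^{n}\,dx$. Integrating term by term against the Beta evaluation $\int_{0}^{1}\sqrt{x}(1-x)^{n}\,dx=\frac{\sqrt{\pi}\,n!}{2\,\Gamma(n+5/2)}$ and simplifying collapses the mixed product back to a single central binomial, giving
\[
 \sum_{n=0}^{\infty}\frac{2(n+1)}{(2n+1)^{2}(2n+3)}\binom{4n}{2n}(4\gamma)^{n}
 =\sum_{n=0}^{\infty}\left(\frac{1}{4(2n+1)}+\frac{1}{2(2n+1)^{2}}-\frac{1}{4(2n+3)}\right)\binom{4n}{2n}(4\gamma)^{n}
\]
after a partial-fraction split in $n$. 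The pieces weighted by $(2n+1)^{-1}$ and $(2n+3)^{-1}$ are algebraic: they are single antiderivatives, in the variable $t$ with $t^{2}=4\gamma$, of the even part $\tfrac{1}{2}\bigl((1-4t)^{-1/2}+(1+4t)^{-1/2}\bigr)$ of $\sum_{m}\binom{2m}{m}t^{m}$, and hence reduce to expressions in $\sqrt{1\pm 4t}$.

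The main obstacle is the middle sum $\sum_{n}\frac{\binom{4n}{2n}}{(2n+1)^{2}}(4\gamma)^{n}$, which requires one \emph{further} integration of an integrand of the shape $\frac{\sqrt{1+4t}-\sqrt{1-4t}}{4t}$ and thereby produces the genuinely transcendental terms: an inverse-hyperbolic value $\ln(1+\sqrt{2})$ at $4\gamma=\tfrac{1}{16}$, and inverse-tangent values giving multiples of $\pi/\sqrt{3}$ (and, for $4\gamma<0$, of $\pi$) at the remaining nodes. Since this step needs only \emph{one} extra integration of an algebraic integrand, no dilogarithms intrude here, in contrast to Corollary \ref{corollaryfirstthree}; this is where the admissible $\gamma$ are forced, exactly the uniqueness phenomenon noted after Corollary \ref{corollaryfirstthree}, as only those $\gamma$ for which this last sum closes over the short list of special arctangent and inverse-hyperbolic values survive.

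A genuine technical subtlety, which I would address by analytic continuation, is that for $\gamma\in\{-\tfrac{1}{36},-\tfrac{1}{8},-\tfrac{3}{4}\}$ one has $|4\gamma|>\tfrac{1}{16}$, so the central-binomial series above lie outside their disk of convergence and the term-by-term interchange is not literally licit; the equalities must then be read through the closed forms of $S_{1}$, $S_{2}$, $S_{3}$, which are analytic in $\gamma$ and agree with the target integral (itself analytic in $\gamma$ wherever the modulus of $\text{{\bf K}}$ stays in $[0,1]$) on the small interval where the series do converge, hence everywhere along the real segment joining the nodes to the origin by the identity theorem. Finally, multiplying the resulting closed form by the $\tfrac{\pi^{2}}{2\sqrt{2}}$-type normalization inherited from \eqref{relatedcubedbinomial} and \eqref{EMaclaurin}, and restoring the constant prefactors absorbed during the radical simplification, reproduces the $\pi^{2}$- and $\pi^{3}$-level evaluations stated in \eqref{mainresult4}--\eqref{mainresult7}.
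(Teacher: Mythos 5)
Your proposal is correct and is essentially the paper's own proof: you feed the same sequences into Theorem \ref{SIBPvariant} (your $\gamma$ is the paper's $\alpha/64$, so your four nodes $\gamma = \tfrac{1}{64}, -\tfrac{1}{36}, -\tfrac{1}{8}, -\tfrac{3}{4}$ are exactly the paper's $\alpha = 1, -\tfrac{16}{9}, -8, -48$), and you identify the transformed inner product \eqref{displayinner2} with the integrals \eqref{mainresult4}--\eqref{mainresult7} through \eqref{relatedcubedbinomial} and \eqref{EMaclaurin} just as the paper does. The remaining differences are organizational rather than substantive. Where the paper expands \eqref{displayinner1} into ${}_3F_2$-integrands, integrates term by term to reach a five-term series, and is left with the two series in \eqref{3F24F3} (evaluated via double integrals with elementary antiderivatives), you evaluate \eqref{displayinner1} directly with the Beta integral and a partial-fraction split; your ``middle sum'' $\sum_n \binom{4n}{2n}(4\gamma)^n/(2n+1)^2$ is precisely the paper's ${}_3F_2(1)$ from \eqref{3F24F3}, and your one-extra-integration evaluation of it is equivalent to the paper's double-integral argument. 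One point where you are more careful than the paper: for the three negative nodes the central-binomial series genuinely diverge (the paper dispatches these cases with ``we may mimic our above proof''), and your analytic-continuation/identity-theorem argument is a legitimate way to close that gap, provided you also verify that the integral \eqref{infinitefamintegral} is analytic in the parameter along the relevant real segment, which holds because the modulus of the complete elliptic integral factor stays inside $[0,1)$ there.
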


\begin{proof}
 In Theorem \ref{SIBPvariant}, we set $a_{n} = 4^{-n} \binom{2 n}{n}$
 and $b_{n} = \frac{64^{-n} (n+1) \binom{2 n}{n} \binom{4 n}{2 n}}{2 n+1}$. 
 Using \eqref{relatedcubedbinomial}, Theorem \ref{SIBPvariant} then gives us the equality of 
\begin{align*}
 \int_{0}^{1} \frac{1}{16} \sqrt{x} \Bigg( 16 & \, 
 \, {}_{3}F_{2}\!\!\left[ 
 \begin{matrix} 
 \frac{1}{4}, \frac{1}{2}, \frac{3}{4} \vspace{1mm}\\ 
 1, \frac{3}{2}
 \end{matrix} \ \Bigg| \ 1 - x \right] + \, {}_{3}F_{2}\!\!\left[ 
 \begin{matrix} 
 \frac{5}{4}, \frac{3}{2}, \frac{7}{4} \vspace{1mm}\\ 
 2, \frac{5}{2}
 \end{matrix} \ \Bigg| \ 1 - x \right] - \\
 & x \, {}_{3}F_{2}\!\!\left[ 
 \begin{matrix} 
 \frac{5}{4}, \frac{3}{2}, \frac{7}{4} \vspace{1mm}\\ 
 2, \frac{5}{2}
 \end{matrix} \ \Bigg| \ 1 - x \right] \Bigg) \, dx
\end{align*}
 and 
\begin{equation}\label{multiplenotcubed}
 \int_0^1 \frac{2 \sqrt{2} \text{{\bf E}}\left(\sqrt{1-x}\right) 
 \text{{\bf K}}^{2}\left(\sqrt{\frac{1}{2}-\frac{\sqrt{\frac{1-\sqrt{1-x}}{x}}}{\sqrt{2}}}\right)}{\pi ^2 
 \sqrt[4]{-x+2 \sqrt{1-x}+2}} \, dx. 
\end{equation}
 Applying term-by-term integration to the above expression involving ${}_{3}F_{2}$-series, we find that the multiple elliptic integral in 
 \eqref{multiplenotcubed} equals $$ \sum _{n=0}^{\infty } \left(\frac{4}{(2 n + 1)^2} - \frac{33}{16 (2 n + 1)}-\frac{15}{4 (2 n + 3)^2} + 
 \frac{6}{2 n + 3} - \frac{63}{16 (2 n + 5)}\right) \binom{4 n}{2 n} 4^{-2 n - 1}. $$ This reduces, via classically known series expansions, to: 
 $$ \sum _{n=0}^{\infty } \frac{4^{-2 n} \binom{4 n}{2 n}}{(2 n + 1)^2} - \sum _{n=0}^{\infty } \frac{15\ 4^{-2-2 n} \binom{4 n}{2 n}}{(2 n + 
 3)^2}-\frac{3}{10 \sqrt{2}}. $$ So, it remains to evaluate 
\begin{equation}\label{3F24F3}
 {}_{3}F_{2}\!\!\left[ 
 \begin{matrix} 
 \frac{1}{4}, \frac{1}{2}, \frac{3}{4} \vspace{1mm}\\ 
 \frac{3}{2}, \frac{3}{2} 
 \end{matrix} \ \Bigg| \ 1 \right] \ \ \ \text{and} \ \ \ 
 {}_{4}F_{3}\!\!\left[ 
 \begin{matrix} 
 \frac{1}{4}, \frac{3}{4}, \frac{3}{2}, \frac{3}{2} \vspace{1mm}\\ 
 \frac{1}{2}, \frac{5}{2}, \frac{5}{2}
 \end{matrix} \ \Bigg| \ 1 \right]. 
\end{equation}
 For the former case, we may rewrite this ${}_{3}F_{2}(1)$-series as $$\int _0^1\int _0^1\frac{\sqrt{1+\sqrt{1-t^2 u^2}}}{\sqrt{2} \sqrt{1-t^2 u^2}} \, 
 dt \, du, $$ and the corresponding antiderivatives admit elementary forms. As for the ${}_{4}F_{3}(1)$-series, the same argument together with a 
 reindexing may be applied. 

 We proceed to set 
\begin{equation}\label{aandbalpha}
 a_{n} = 4^{-n} \binom{2 n}{n} \ \ \ \text{and} \ \ \ b_{n} = \frac{64^{-n} (n+1) \binom{2 n}{n} \binom{4 n}{2 n} \alpha ^n}{2 n+1} 
\end{equation}
 in Theorem \ref{SIBPvariant}. For the $\alpha = -\frac{16}{9}$ case, we may mimic our above proof to 
 prove \eqref{mainresult5}. For the $\alpha = -{8}$ case, 
 we may again mimic our above proof to prove \eqref{mainresult6}. 
 For the $\alpha = -{48}$ case, we may, once again, mimic our above proof to prove \eqref{mainresult7}. 
\end{proof}

 Setting $\alpha = \frac{3}{4}$ may be applied to prove \eqref{mainresult8}. The $\alpha = \frac{3}{4}$ case has led us to discover a complex analytic 
 property concerning the arctanh function that may be applied to prove the closed-form evaluation below for an infinite family of generalizations of 
 \eqref{mainresult8}, as in Corollary \ref{infinitecorollary} below. 

\begin{corollary}\label{infinitecorollary}
 The CG-type integral
\begin{equation}\label{infinitefamintegral}
 \int_0^1 \frac{\text{{\bf E}}\left(\sqrt{1-x}\right) \text{{\bf K}}^{2}\left(\sqrt{\frac{1}{2}-\frac{\sqrt{\frac{1-\sqrt{1-\alpha x}}{\alpha 
 x}}}{\sqrt{2}}}\right)}{\sqrt[4]{ + 2 \sqrt{1-\alpha x} - \alpha x + 2}} \, dx 
\end{equation}
 equals $$ \frac{\pi ^2 \left(36 \alpha +2 \sqrt{1-\alpha }-15 \sqrt{2} \sqrt{\frac{\sqrt{1-\alpha }+1}{\alpha }} \alpha \coth ^{-1}\left(\frac{\sqrt{2} 
 \sqrt{\sqrt{1-\alpha }+1}}{\sqrt{\alpha }}\right)-2\right)}{60 \sqrt{\sqrt{1-\alpha }+1} \alpha } $$ for positive values $\alpha$. 
\end{corollary}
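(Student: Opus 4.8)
The plan is to apply Theorem~\ref{SIBPvariant} with the sequences in \eqref{aandbalpha}, exactly as in the proof of Corollary~\ref{corollarynextthree} (of which this is the one-parameter generalization specializing to \eqref{mainresult8} at $\alpha=\tfrac34$), and then to reduce the resulting identity to the closed-form evaluation of a single power series in $\alpha$. First I would record that, with $a_n = 4^{-n}\binom{2n}{n}$, the series $f(x) = \sum_n x^{n+1/2}a_n = \sqrt{x}\,(1-x)^{-1/2}$ is elementary, while the first factor of \eqref{displayinner2}, namely $\sum_n \frac{\Gamma(n+3/2)}{\Gamma(n+1)}(1-x)^n a_n$, collapses by the Legendre duplication formula to $\frac{\sqrt{\pi}}{2}\sum_n (2n+1)\binom{2n}{n}^2 16^{-n}(1-x)^n = \frac{\text{{\bf E}}(\sqrt{1-x})}{\sqrt{\pi}\,x}$, using the differentiated generating function $\sum_n (2n+1)\binom{2n}{n}^2 (t/16)^n = \frac{2\text{{\bf E}}(\sqrt t)}{\pi(1-t)}$. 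The second factor of \eqref{displayinner2} is, by \eqref{relatedcubedbinomial} with argument $\alpha x/256$, equal to $\frac{2\sqrt2\,x\,\text{{\bf K}}^2(\cdot)}{\pi^{3/2}\,\sqrt[4]{2\sqrt{1-\alpha x}-\alpha x+2}}$, carrying exactly the elliptic argument that appears in \eqref{infinitefamintegral}. Multiplying the two factors, the powers of $x$ cancel, so the right-hand side of Theorem~\ref{SIBPvariant} becomes $\frac{2\sqrt2}{\pi^2}$ times the integral \eqref{infinitefamintegral}.

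Next I would evaluate the left-hand inner product $\langle f(x), \mathfrak{g}(1-x)\rangle$ of \eqref{displayinner1}. Because $f(x) = \sqrt{x}\,(1-x)^{-1/2}$ is known in closed form, the change of variable $x\mapsto 1-x$ followed by term-by-term integration against $\sqrt{1-u}$ reduces this inner product, via the Beta integral $\int_0^1 u^n\sqrt{1-u}\,du = \mathrm{B}(n+1,\tfrac32)$, to $2\,\Sigma(\alpha)$, where $\Sigma(\alpha) := \sum_n \frac{(n+1)\binom{4n}{2n}\alpha^n}{16^n(2n+1)^2(2n+3)}$; the interchange of summation and integration is justified in the routine manner indicated after Theorem~\ref{SIBPvariant}. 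Equating the two sides of Theorem~\ref{SIBPvariant} then shows that the integral \eqref{infinitefamintegral} equals $\frac{\pi^2}{\sqrt2}\,\Sigma(\alpha)$, so the whole problem is reduced to producing a closed form for the single series $\Sigma(\alpha)$.

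To evaluate $\Sigma(\alpha)$ I would split the weight by partial fractions, $\frac{n+1}{(2n+1)^2(2n+3)} = \frac{1}{8(2n+1)} + \frac{1}{4(2n+1)^2} - \frac{1}{8(2n+3)}$, and feed each piece the integral representations $\frac{1}{2n+1} = \int_0^1 t^{2n}\,dt$, $\frac{1}{(2n+1)^2} = -\int_0^1 t^{2n}\ln t\,dt$, and $\frac{1}{2n+3} = \int_0^1 t^{2n+2}\,dt$. Summing $\binom{4n}{2n}$ against these weights calls for the even-part generating function $\sum_n \binom{4n}{2n} w^{2n} = \frac12\bigl[(1-4w)^{-1/2} + (1+4w)^{-1/2}\bigr]$ evaluated at $w = \frac{\sqrt\alpha\,t}{4}$. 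The two pieces without $\ln t$ integrate to purely algebraic expressions in $\sqrt{1\pm\sqrt\alpha}$ (hence, after combination, in $\sqrt{1-\alpha}$), which should account for the rational and $\sqrt{1-\alpha}$ terms of the claimed closed form; the $\frac{1}{(2n+1)^2}$ piece, namely $-\int_0^1 \ln t\,\bigl[(1-\sqrt\alpha t)^{-1/2}+(1+\sqrt\alpha t)^{-1/2}\bigr]\,dt$, is the sole source of the transcendental $\coth^{-1}$ term.

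The hard part will be consolidating the two branch contributions of the logarithmic integral into the single real inverse hyperbolic cotangent $\coth^{-1}\!\bigl(\tfrac{\sqrt2\,\sqrt{\sqrt{1-\alpha}+1}}{\sqrt\alpha}\bigr)$ of the statement. Integrating $\int_0^1 \ln t\,(1\mp\sqrt\alpha t)^{-1/2}\,dt$ by the substitution $u = \sqrt{1\mp\sqrt\alpha t}$ yields elementary antiderivatives whose boundary values involve $\ln(1-\sqrt\alpha)$ and $\ln(1+\sqrt\alpha)$ separately; the arctanh/$\coth^{-1}$ form emerges only after combining these through the nested-radical identity $\sqrt{1-\sqrt\alpha}+\sqrt{1+\sqrt\alpha} = \sqrt2\,\sqrt{1+\sqrt{1-\alpha}}$ together with the addition law for arctanh. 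For $0<\alpha\le 1$ both radicals $\sqrt{1\pm\sqrt\alpha}$ are real, but for $\alpha>1$ the factor $\sqrt{1-\sqrt\alpha}$ becomes imaginary, so one must invoke the complex-analytic continuation of $\coth^{-1}$ (the ``complex analytic property concerning the arctanh function'' alluded to above) to verify that the imaginary contributions cancel and that the formula stays real and valid for all positive $\alpha$. Once this consolidation is in place, collecting the algebraic and transcendental pieces and multiplying by $\frac{\pi^2}{\sqrt2}$ delivers the stated expression; as a consistency check, $\alpha=\tfrac34$ gives $\coth^{-1}(2)=\tfrac12\ln 3$ and recovers \eqref{mainresult8}.
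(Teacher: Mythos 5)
Your proposal is correct, and its skeleton coincides with the paper's: the paper likewise feeds the sequences \eqref{aandbalpha} into Theorem~\ref{SIBPvariant}, identifies the right-hand side of \eqref{displayinner2} as $\tfrac{2\sqrt{2}}{\pi^2}$ times \eqref{infinitefamintegral} via \eqref{relatedcubedbinomial}, and reduces everything to the series whose only transcendental constituent is ${}_{3}F_{2}\!\left[\tfrac14,\tfrac12,\tfrac34;\tfrac32,\tfrac32;\alpha\right]=\sum_{n}\binom{4n}{2n}\tfrac{\alpha^n}{16^n(2n+1)^2}$. Where you genuinely diverge is in the endgame. The paper evaluates this ${}_{3}F_{2}$ by a formula containing $\tfrac{i\pi}{\sqrt{\alpha}}$ and an arctanh of the \emph{complex} argument \eqref{maincomplex}, and must then split that arctanh into real and imaginary parts and prove, by a differentiation argument, that the resulting two-term arctan combination is identically $\tfrac{\pi}{4}$, so that the imaginary contributions cancel and the real $\coth^{-1}$ term survives. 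You instead evaluate the same series through the real representation $-\tfrac12\int_0^1 \ln t\,\bigl[(1-\sqrt{\alpha}\,t)^{-1/2}+(1+\sqrt{\alpha}\,t)^{-1/2}\bigr]dt$, elementary substitutions $u=\sqrt{1\mp\sqrt{\alpha}\,t}$, and the consolidation $\ln(1-\sqrt{1-\sqrt{\alpha}})-\ln(\sqrt{1+\sqrt{\alpha}}-1)=\coth^{-1}\bigl(\tfrac{\sqrt{2}\sqrt{1+\sqrt{1-\alpha}}}{\sqrt{\alpha}}\bigr)$, which follows from $\sqrt{1-\sqrt{\alpha}}+\sqrt{1+\sqrt{\alpha}}=\sqrt{2}\sqrt{1+\sqrt{1-\alpha}}$; one can check this reproduces exactly the paper's evaluation, since it forces $\tanh^{-1}$ of \eqref{maincomplex} to equal $\tfrac{i\pi}{4}$ plus half the real arccoth. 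Your route is more elementary and stays real throughout, with no computer-algebra output and no complex-analytic cancellation step; the paper's complex formula, on the other hand, is what powers the $\alpha<0$ classification in Corollary~\ref{classificationcorollary}, which your real-variable method does not address. One small correction: your worry about $\alpha>1$ is moot, since the expansion \eqref{relatedcubedbinomial} underlying the identity requires $|\alpha x|\le 1$ on $[0,1]$ and the stated closed form already involves $\sqrt{1-\alpha}$, so the corollary is effectively a statement for $0<\alpha\le 1$, precisely the range where your argument needs no continuation.
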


\begin{proof}
 We again set the $a$- and $b$-sequences as in \eqref{aandbalpha}. Mimicking our proof of Corollary \ref{corollarynextthree}, we can show that 
 $$ \sum _{n=0}^{\infty } \left(\frac{4}{(2 n+1)^2} + \frac{-32-\alpha }{16 (2 n+1)} - \frac{15 \alpha }{4 (2 n+3)^2} + \frac{2 (1+2 \alpha )}{2 n + 
 3} - \frac{63 \alpha }{16 (2 n+5)}\right) \binom{4 n}{2 n} 2^{-2-4 n} \alpha ^n $$ equals 
\begin{equation}\label{infiniteclosed}
 \int_0^1 \frac{2 \sqrt{2} \text{{\bf E}}\left(\sqrt{1-x}\right) \text{{\bf K}}^{2}\left(\sqrt{\frac{1}{2}-\frac{\sqrt{\frac{1-\sqrt{1-\alpha x}}{\alpha 
 x}}}{\sqrt{2}}}\right)}{\pi ^2 \sqrt[4]{\alpha (-x)+2 \sqrt{1-\alpha x}+2}} \, dx 
\end{equation}
 for real values $\alpha$. 
 By direct analogy with our proof for 
 the ${}_{3}F_{2}$-series in \eqref{3F24F3}, we can show that 
 $$ {}_{3}F_{2}\!\!\left[ 
 \begin{matrix} 
 \frac{1}{4}, \frac{1}{2}, \frac{3}{4} \vspace{1mm}\\ 
 \frac{3}{2}, \frac{3}{2} 
 \end{matrix} \ \Bigg| \ \alpha \right] 
 = \frac{i \pi }{\sqrt{\alpha }}+\sqrt{2} \left(\frac{2}{\sqrt{\sqrt{1-\alpha }+1}}-\frac{2 \sqrt{2} \tanh ^{-1}\left(\frac{\sqrt{\alpha }+i
 \left(\sqrt{1-\alpha }+1\right)}{\sqrt{2} \sqrt{\sqrt{1-\alpha }+1}}\right)}{\sqrt{\alpha }}\right) $$
 for real $\alpha$. This can be used to show that 
 \eqref{infiniteclosed} is equal to the expression given by the following Mathematica output 
\begin{verbatim}
((-240*I)*Pi*Sqrt[1 + Sqrt[1 - \[Alpha]]]*Sqrt[\[Alpha]] - 
524*Sqrt[2]*\[Alpha] - 524*Sqrt[2 - 2*\[Alpha]]*\[Alpha] - 9*Sqrt[1 + 
Sqrt[1 - \[Alpha]]]*(-(Sqrt[2]*Sqrt[1 + Sqrt[1 - \[Alpha]]]) + 
2*Sqrt[2]*Sqrt[1 + Sqrt[1 - \[Alpha]]]*Sqrt[1 - \[Alpha]])*(2 + 2*Sqrt[1 - 
\[Alpha]] - \[Alpha])*\[Alpha] + (120*I)*Pi*Sqrt[1 + Sqrt[1 - 
\[Alpha]]]*\[Alpha]^(3/2) + 225*Sqrt[2]*\[Alpha]^2 - 45*Sqrt[2 - 
2*\[Alpha]]*\[Alpha]^2 + 18*Sqrt[2]*\[Alpha]^3 - (240*I)*Pi*Sqrt[1 + 
Sqrt[1 - \[Alpha]]]*Sqrt[-((-1 + \[Alpha])*\[Alpha])] - 480*Sqrt[1 + 
Sqrt[1 - \[Alpha]]]*(-2*Sqrt[\[Alpha]] + \[Alpha]^(3/2) - 2*Sqrt[-((-1 + 
\[Alpha])*\[Alpha])])*ArcTanh[(I*(1 + Sqrt[1 - \[Alpha]]) + 
Sqrt[\[Alpha]])/(Sqrt[2]*Sqrt[1 + Sqrt[1 - \[Alpha]]])])/(240*(-1 + Sqrt[1 - 
\[Alpha]])*(1 + Sqrt[1 - \[Alpha]])^(7/2))
\end{verbatim}
 So, it remains to evaluate 
\begin{equation}\label{maincomplex}
 \tanh ^{-1}\left(\frac{\sqrt{\alpha }+i \left(\sqrt{1-\alpha }+1\right)}{\sqrt{2} \sqrt{\sqrt{1-\alpha }+1}}\right)
\end{equation}
 for real variables $\alpha$. 
 For positive values $\alpha$, the usual 
 extension of the arctanh function for complex arguments gives us that \eqref{maincomplex} 
 equals 
\begin{align*}
 & -\frac{1}{4} \ln \left(\left(1-\frac{1}{\sqrt{2} \sqrt{\frac{\sqrt{1-\alpha }+1}{\alpha }}}\right)^2+\frac{1}{2} \left(\sqrt{1-\alpha 
 }+1\right)\right) + \\
 & \frac{1}{4} \ln \left(\left(\frac{1}{\sqrt{2} \sqrt{\frac{\sqrt{1-\alpha }+1}{\alpha }}}+1\right)^2+\frac{1}{2} 
 \left(\sqrt{1-\alpha }+1\right)\right)+ \\ 
 & i \left(\frac{1}{2} \tan ^{-1}\left(\frac{\sqrt{\sqrt{1-\alpha }+1}}{\sqrt{2} \left(1-\frac{1}{\sqrt{2} 
 \sqrt{\frac{\sqrt{1-\alpha }+1}{\alpha }}}\right)}\right)+\frac{1}{2} \tan ^{-1}\left(\frac{\sqrt{\sqrt{1-\alpha }+1}}{\sqrt{2} 
 \left(\frac{1}{\sqrt{2} \sqrt{\frac{\sqrt{1-\alpha }+1}{\alpha }}}+1\right)}\right)\right) 
\end{align*}
 Differentiating the expression 
 $$ \frac{1}{2} \tan ^{-1}\left(\frac{\sqrt{\sqrt{1-\alpha }+1}}{\sqrt{2} \left(1-\frac{1}{\sqrt{2} \sqrt{\frac{\sqrt{1-\alpha }+1}{\alpha
 }}}\right)}\right)+\frac{1}{2} \tan ^{-1}\left(\frac{\sqrt{\sqrt{1-\alpha }+1}}{\sqrt{2} \left(\frac{1}{\sqrt{2} \sqrt{\frac{\sqrt{1-\alpha
 }+1}{\alpha }}}+1\right)}\right)$$
 can be used to show that the abvoe expression always equals $\frac{\pi }{4}$, 
 and this gives us our desired closed form for the $\alpha > 0$ case. 
\end{proof}

 If $\alpha < 0$, the evaluation of \eqref{maincomplex} proves to be of a more interesting nature in terms of the challenge of producing closed forms as 
 in Corollary \ref{corollarynextthree}. This is formalized as below. For the time being, we remark that the arccoth evaluation in Corollary 
 \ref{infinitecorollary} is of interest in terms of how this evaluation may be used to obtain simply closed forms for CG-type integrals, by sysmetically 
 searching for $\alpha$-values such that the arccoth argument in Corollary \ref{infinitecorollary} is reducible to simple constants such as $\ln 2$. 

\begin{example}
 Setting $\alpha = \frac{576}{625}$, we can prove the evaluation 
 $$ \frac{\pi ^2 (551-400 \ln (2))}{3840 \sqrt{2}}
 = \int_0^1 \frac{\text{{\bf E}}\left(\sqrt{1-x}\right) \text{{\bf K}}^2\left(\frac{\sqrt{24-5 \sqrt{\frac{50-2 \sqrt{625-576 x}}{x}}}}{4
 \sqrt{3}}\right)}{\sqrt[4]{50 \left(\sqrt{625-576 x}+25\right)-576 x}} \, dx. $$
\end{example}

\begin{example}
 Setting $\alpha = \frac{32}{81}$, we can prove the evaluation 
 $$ \pi ^2 \left(\frac{93}{640}-\frac{3 \ln (2)}{32}\right)
 = \int_0^1 \frac{\text{{\bf E}}\left(\sqrt{1-x}\right) \text{{\bf K}}^{2}\left(\sqrt{\frac{1}{2}-\frac{3}{8} \sqrt{\frac{9-\sqrt{81-32
 x}}{x}}}\right)}{\sqrt[4]{18 \left(\sqrt{81-32 x}+9\right)-32 x}} \, dx. $$
\end{example}
 
\begin{corollary}\label{classificationcorollary}
 For $\alpha < 0$, the integral in \eqref{infinitefamintegral} is expressible in closed
 form as a finite combination of algebraic expressions and a given set of previously recognized constants 
 if and only if the same applies to 
\begin{equation}\label{mainarctan}
 \tan ^{-1}\left(\frac{\sqrt{1-\alpha }+\frac{1}{\sqrt{-\frac{1}{\alpha }}}+1}{\sqrt{2} \sqrt{\sqrt{1-\alpha }+1}}\right). 
\end{equation}
\end{corollary}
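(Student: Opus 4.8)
The plan is to start from the explicit evaluation of the integral \eqref{infinitefamintegral} already obtained in the proof of Corollary \ref{infinitecorollary}, namely the expression displayed there for \eqref{infiniteclosed}, which is valid for all real $\alpha$ and in which the only ingredient that is neither algebraic nor a rational multiple of $\pi$ is the single complex arctanh term \eqref{maincomplex}. Thus the entire question reduces to analyzing \eqref{maincomplex} for $\alpha < 0$, since every other summand in that expression is manifestly a closed form. I would first record the schematic decomposition of the integral as an algebraic-plus-$\pi$ part added to an algebraic prefactor times $\tanh^{-1}(\cdot)$, so that it suffices to understand the product of that prefactor with the arctanh factor.

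Next I would specialize to $\alpha < 0$ by fixing the principal branch $\sqrt{\alpha} = i\sqrt{-\alpha}$, noting that $\sqrt{1-\alpha}$ is then a real number exceeding $1$. Substituting this into the argument of \eqref{maincomplex}, the term $\sqrt{\alpha}$ in the numerator becomes purely imaginary, so the argument collapses to $it$, where $t = \frac{\sqrt{-\alpha}+\sqrt{1-\alpha}+1}{\sqrt{2}\sqrt{\sqrt{1-\alpha}+1}}$; using the identity $\tfrac{1}{\sqrt{-1/\alpha}} = \sqrt{-\alpha}$, this $t$ is exactly the argument appearing in \eqref{mainarctan}. Applying the elementary branch identity $\tanh^{-1}(it) = i\tan^{-1}(t)$, which is valid for all real $t$ since $1 \pm it$ have positive real part, the transcendental factor \eqref{maincomplex} becomes $i$ times the arctan in \eqref{mainarctan}.

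The remaining bookkeeping is to confirm that the full expression stays real and isolates \eqref{mainarctan} with a nonzero algebraic coefficient. For $\alpha < 0$ each of $\sqrt{\alpha}$, $\alpha^{3/2}$, and $\sqrt{-(\alpha-1)\alpha}$ is purely imaginary, so the algebraic prefactor multiplying the arctanh in the Corollary \ref{infinitecorollary} expression equals $i$ times a real quantity that is nonzero for $\alpha < 0$; multiplying by the factor $i$ coming from $\tanh^{-1}(it) = i\tan^{-1}(t)$ yields a real, nonzero algebraic multiple of \eqref{mainarctan}. The same substitution turns every scattered $i\pi$-term into a real quantity, so \eqref{infinitefamintegral} equals an algebraic combination of $\pi$ and radicals plus a nonzero algebraic multiple of \eqref{mainarctan}. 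Since the algebraic-plus-$\pi$ part is a closed form and the coefficient of \eqref{mainarctan} is a nonzero algebraic number, the integral is a closed form in the stated sense if and only if \eqref{mainarctan} is, which is the assertion.

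I expect the main obstacle to be the careful branch bookkeeping: verifying that, under one consistent choice of branch for $\sqrt{\alpha}$ and for $\tanh^{-1}$, the purely imaginary contributions of the prefactor, of the arctanh factor, and of the several $i\pi$-terms combine into a genuinely real expression—as they must, since the integrand of \eqref{infinitefamintegral} is real for $\alpha < 0$—and, crucially for the ``only if'' direction, that the resulting algebraic coefficient of \eqref{mainarctan} never vanishes for $\alpha < 0$, so that the equivalence is non-degenerate in both directions.
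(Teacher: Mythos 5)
Your proposal is correct and follows essentially the same route as the paper's proof: it starts from the evaluation of \eqref{infiniteclosed} obtained in the proof of Corollary \ref{infinitecorollary}, then observes that for $\alpha<0$ the branch identity turns the arctanh term \eqref{maincomplex} into $i$ times the arctan expression \eqref{mainarctan}. Your extra bookkeeping---checking that the algebraic prefactor of the arctan term is purely imaginary and \emph{nonzero} for $\alpha<0$, which is what makes the ``only if'' direction non-degenerate---is left implicit in the paper's two-sentence proof, so it tightens rather than departs from the argument.
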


\begin{proof}
 From the evaluation for \eqref{infiniteclosed} given in the proof of Corollary \ref{infinitecorollary}, this gives us that the desired integral in 
 \eqref{infinitefamintegral} equals a finite combination of algebraic expressions together with $\pi^2$ and \eqref{maincomplex}, for all real values 
 $ \alpha$. For negative values $\alpha$, the usual extension of the arctanh function to complex arguments gives us that \eqref{maincomplex} equals the 
 imaginary unit times the arctan expression in \eqref{mainarctan}, and hence the desired result. 
\end{proof}

 The classification result in Corollary \ref{classificationcorollary} is of interest in the following sense: If we want to obtain a closed form for the CG-type 
 integral in \eqref{infinitefamintegral} for rational values $\alpha$, then it is only in exceptional cases that \eqref{mainarctan} will admit a closed form. 
 A systematic computer search based on the algebraic values of $\tan(q \pi)$ for $q \in \mathbb{Q}$ further demonstrated that \eqref{mainarctan} is 
 expressible in closed form for $\alpha \in \mathbb{Q}$ in only exceptional cases, which emphasizes the unique quality of the CG-type integrals 
 highlighted in \eqref{mainresult4}--\eqref{mainresult7}. For example, setting $\alpha = -\frac{1}{3}$ yields an expression involving 
 $$ \tan^{-1}\left(\sqrt{\sqrt{3}-\frac{3}{2}} \left(1+\sqrt{3}\right)\right),$$ which does not seem to be reducible, e.g., to a rational multiple of $\pi$ or 
 otherwise. The $\alpha = -\frac{16}{9}$ case corresponds to the closed form $\tan \left(\frac{\pi }{3}\right) = \sqrt{3}$, and similarly for 
 \eqref{mainresult6} and \eqref{mainresult7}. 

 The integral evaluations from Section \ref{subsectionMotivating} listed as \eqref{firstwithdE}--\eqref{mainresult12}
 may be proved via Theorem \ref{SIBPvariant} 
 in much the same way as in with Corollaries \ref{corollaryfirstthree}--\ref{infinitecorollary}; 
 for the sake of brevity, we leave it to the reader to verify this. 

\subsection{New integrals inspired by Wan and Zucker}\label{subsectionWanZucker}
 Integrals involving threefold products of complete elliptic functions 
 as in the formulas 
\begin{equation}\label{nonintro5}
\frac{\pi ^3}{6 \sqrt{2}} = 
 \int_0^1 \sqrt{\frac{k}{\sqrt{1-k^2}}} \text{{\bf K}}^{2}\left(\sqrt{1-k^2}\right)
 (2 \text{{\bf E}}(k)-\text{{\bf K}}(k)) \, dk
\end{equation}
 and 
\begin{equation}\label{nonintro6}
 \frac{\Gamma^4 \left(\frac{1}{8}\right) 
 \Gamma^4 \left(\frac{3}{8}\right)}{384 \sqrt{2} \pi ^2} 
 = \int_0^1 \frac{\left(2 + 3 k - k^2\right) \text{{\bf K}}^3(k)}{\sqrt{k+1}} \, dk 
\end{equation}
 and 
\begin{equation}\label{nonintro7}
 \frac{\left(\sqrt{2}-1\right)^{3/2} \Gamma^8 \left(\frac{1}{4}\right)}{128 \sqrt{2} \pi ^2} 
 = \int_0^1 \sqrt[4]{k} \sqrt[4]{1-k^2} \text{{\bf K}}^3(k) \, dk
\end{equation}
 were given by Wan and Zucker in \cite{WanZucker2016} in the context of the study of lattice sums, and this inspires the new results we introduce below, 
 which resemble the Wan--Zucker formula in \eqref{nonintro5}, as we provide closed forms resembling the left-hand side of \eqref{nonintro5} for integrals 
 satisfying the three listed conditions in Section \ref{subsectionMotivating} together with the condition that the integrands are to contain the last integrand 
 factor displayed in \eqref{nonintro5}. 

\begin{corollary}\label{corollarylattice}
 The CG-type integral evaluations below hold: 
\begin{align*}
 & \frac{\pi ^3 (29+32 \ln (2))}{2048} 
 = \int_0^1 y \left(1-y^2\right) \text{{\bf K}}^{2}\left(\sqrt{\frac{1}{2}-\frac{y}{2}}\right)
 (2 \text{{\bf E}}(y)-\text{{\bf K}}(y)) \, dy, \\
 & \ \\ 
 & \frac{\pi ^2 G}{32 \sqrt{2}} + 
 \frac{7 \pi ^2}{64 \sqrt{2}}-\frac{9 \pi ^3}{512 \sqrt{2}}+\frac{\pi ^3 \ln (2)}{128 \sqrt{2}} = \\
 & \int_0^1 y \left(1 - 
 y^2\right) \text{{\bf K}}^{2}\left(\frac{\sqrt{2-\sqrt{2} \sqrt{y^2+1}}}{2} \right)
 (2 \text{{\bf E}}(y)-\text{{\bf K}}(y)) \, dy, \\
 & \ \\
 & -\frac{57}{64} \pi ^2 \ln (\phi )+\frac{\pi ^4}{320}+\frac{53 \sqrt{5} \pi ^2}{256} = \\ 
 & \int_0^1 y \left(1-y^2\right) \text{{\bf K}}^{2}\left(\frac{\sqrt{2-\sqrt{5-y^2}}}{2} \right)
 (2 \text{{\bf E}}(y)-\text{{\bf K}}(y)) \, dy.
\end{align*}
\end{corollary}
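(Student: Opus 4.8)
The plan is to realize each of the three integrals as an instance of the inner-product transformation in Theorem \ref{SIBPvariant}, but now with a $b$-sequence engineered to produce the Wan--Zucker factor $2\text{\bf E}(y)-\text{\bf K}(y)$ rather than a pure $\text{\bf K}^2$-product. First I would recall the differential relation $\frac{d\text{\bf E}(k)}{dk}=\frac{\text{\bf E}(k)-\text{\bf K}(k)}{k}$, noted after \eqref{firstwithdE}, which lets one recognize that $2\text{\bf E}(y)-\text{\bf K}(y)$ is the natural companion of a $\text{\bf K}$-type factor under the Caputo operators; concretely, the combination $2\text{\bf E}-\text{\bf K}$ has a Maclaurin expansion in $y^2$ whose coefficients are, up to normalization, $\frac{4^{-n}\binom{2n}{n}^2}{(1-2n)}$-type terms, so applying $D^{\pm 1/2}$ to it decrements the central-binomial power in the same controlled way exploited throughout Section \ref{subsectionApplications}. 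The substitution $x=1-y^2$ (equivalently writing the integral over $x$ from $0$ to $1$) converts the $\int_0^1 y(1-y^2)(\cdots)\,dy$ form into the half-integer-power inner product $\langle f(x),\mathfrak g(1-x)\rangle$ required by Theorem \ref{SIBPvariant}.

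Next I would, for each of the three target evaluations, select the $a$- and $b$-sequences explicitly. By analogy with Corollaries \ref{corollaryfirstthree}--\ref{infinitecorollary}, the factor $\text{\bf K}^2(\sqrt{\tfrac12-\tfrac{y}{2}})$ in the first integral, $\text{\bf K}^2(\tfrac12\sqrt{2-\sqrt2\sqrt{y^2+1}})$ in the second, and $\text{\bf K}^2(\tfrac12\sqrt{2-\sqrt{5-y^2}})$ in the third are precisely the generating-function images in \eqref{cubedcentralbinomial} evaluated at the arguments $x$, $\tfrac{x}{2}$ (via $\alpha=\tfrac12$), and $-\tfrac{x}{4}$ (via $\alpha=-\tfrac14$) respectively, matching the distinguished parameter values $\alpha=\tfrac12$ and $\alpha=-\tfrac14$ singled out in the discussion following \eqref{twotermLi2}. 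I would therefore take $a_n$ to encode the $2\text{\bf E}-\text{\bf K}$ factor and $b_n=\frac{16^{-n}(n+1)\binom{2n}{n}^2\alpha^n}{2n+1}$ with the appropriate $\alpha$, so that Theorem \ref{SIBPvariant} transforms the integral into a single ${}_pF_q$-type expression. Applying term-by-term integration then reduces each integral to a linear combination of series of the shape $\sum_n \frac{\binom{2n}{n}2^{-2n}}{(2n+c)^2}$ and $\sum_n \frac{\binom{2n}{n}2^{-2n}}{2n+c}$, exactly as in the proof of Corollary \ref{corollaryfirstthree}; these are classically summable, and the Catalan constant $G$ and $\ln\phi$ in the stated closed forms arise from the inverse-square series attached to the $\alpha=\tfrac12$ and $\alpha=-\tfrac14$ dilogarithm values, respectively.

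The main obstacle I anticipate is bookkeeping the $2\text{\bf E}-\text{\bf K}$ factor correctly through the fractional-operator step: unlike the pure $a_n=4^{-n}\binom{2n}{n}$ choice used for the earlier corollaries, the sequence encoding $2\text{\bf E}-\text{\bf K}$ carries an extra polynomial-in-$n$ weight (coming from the $-\tfrac12$ upper parameter in \eqref{EMaclaurin} together with the $-\text{\bf K}$ subtraction), and this weight must survive the $\frac{\Gamma(n+3/2)}{\Gamma(n+1)}$ and $\frac{\Gamma(n+3/2)}{\Gamma(n+2)}$ factors of \eqref{displayinner2} to give a clean ${}_3F_2$ or ${}_4F_3$ at argument $1$. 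Verifying the interchange of summation and $\langle\cdot,\cdot\rangle$ is routine by the term-by-term integration results cited after Theorem \ref{SIBPvariant}, so the real work is ensuring the resulting ${}_pF_q(1)$ evaluations land on the recognized dilogarithm constants; the appearance of $\frac{\pi^4}{320}$ in the third formula suggests that a $\zeta(4)$- or $\text{Li}_2$-at-golden-ratio identity must be invoked, and pinning down that identity is the step I expect to require the most care.
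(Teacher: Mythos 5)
Your proposal takes essentially the same route as the paper: the paper's proof also invokes Theorem \ref{SIBPvariant} with $b_{n} = \frac{16^{-n} (n+1) \binom{2 n}{n}^2 \alpha^n}{2 n+1}$ for $\alpha = 1, \tfrac{1}{2}, -\tfrac{1}{4}$, and with $a_{n} = \frac{4^{-n} \binom{2 n}{n}}{2 n-1}$, whose image under the $\frac{\Gamma\left(n+\frac{3}{2}\right)}{\Gamma(n+1)}$-weighting in \eqref{displayinner2} is exactly the $2 \text{{\bf E}} - \text{{\bf K}}$ factor you describe, after which the other side is integrated term by term and reduced to classical central-binomial series and the Catalan/golden-ratio dilogarithm values. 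One labeling correction: after substituting $y = \sqrt{1-x}$, the CG integrand (integer powers of $x$ against a series in $1-x$) is the \emph{transformed} side \eqref{displayinner2} of the identity, not the half-integer-power inner product \eqref{displayinner1}; it is the latter, i.e.\ the ${}_{3}F_{2}$-type integral, that is evaluated by term-by-term integration.
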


\begin{proof}
 We set 
\begin{equation}\label{latticeinput}
 a_{n} = \frac{4^{-n} \binom{2 n}{n}}{2 n-1} \ \ \ 
 \text{and} \ \ \ b_{n} = \frac{16^{-n} (n+1) \binom{2 n}{n}^2}{2 n+1} 
\end{equation}
 in Theorem \ref{SIBPvariant}. 
 This gives us the equality of 
\begin{align*}
 \int_{0}^{1} -\frac{1}{12} (1-x) \sqrt{x} \Bigg( 12 & \, 
 \, {}_{3}F_{2}\!\!\left[ 
 \begin{matrix} 
 \frac{1}{2}, \frac{1}{2}, \frac{1}{2} \vspace{1mm}\\ 
 1, \frac{3}{2}
 \end{matrix} \ \Bigg| \ 1 - x \right] + \, {}_{3}F_{2}\!\!\left[ 
 \begin{matrix} 
 \frac{3}{2}, \frac{3}{2}, \frac{3}{2} \vspace{1mm}\\ 
 2, \frac{5}{2}
 \end{matrix} \ \Bigg| \ 1 - x \right] - \\
 & x \, {}_{3}F_{2}\!\!\left[ 
 \begin{matrix} 
 \frac{3}{2}, \frac{3}{2}, \frac{3}{2} \vspace{1mm}\\ 
 2, \frac{5}{2}
 \end{matrix} \ \Bigg| \ 1 - x \right] \Bigg) \, dx
\end{align*}
 and $$ \int_0^1 -\frac{2 x \text{{\bf K}}^{2}\left(\sqrt{\frac{1}{2}-\frac{\sqrt{1-x}}{2}}\right) \left(2 \text{{\bf E}}\left(\sqrt{1-x}\right)-\text{{\bf 
 K}}\left(\sqrt{1-x}\right)\right)}{\pi ^2} \, dx. $$ Reversing integration and infinite summation in the above expression involving 
 ${}_{3}F_{2}$-functions, we obtain the expression 
\begin{align*}
 & \sum _{n=0}^{\infty } \Bigg(-\frac{1}{8 (2 n+1)^2}-\frac{1}{32 (2 n+1)}+\frac{1}{8 (2 n+3)^2} + \\
 & \frac{3}{32 (2 n+3)}-\frac{11}{32 (2 n+5)} + \frac{9}{32 (2 n+7)} \Bigg) \binom{2 n}{n} 2^{1-2 n}, 
\end{align*}
 which reduces to $\frac{1}{512} (-29 \pi -32 \pi \ln (2))$ according to classically known series. 
 So, a change of variables then gives us the desired closed form for 
 $$ \int_0^1 y \left(1-y^2\right) \text{{\bf K}}^{2}\left(\sqrt{\frac{1}{2}-\frac{y}{2}}\right)
 (2 \text{{\bf E}}(y)-\text{{\bf K}}(y)) \, dy,$$ 
 and similarly for the remaining integrals in the Corollary under consideration. 
\end{proof}

 By setting $a_{n} = \frac{4^{-n} \binom{2 n}{n}}{2 n-1}$ $b_{n} = \frac{16^{-n} (n+1) \binom{2 n}{n}^2}{2 n+1} \alpha^{n}$ in Theorem 
 \ref{SIBPvariant}, the resultant CG-type integral can be shown to be reducible to a combination of elementary functions together with the same 
 two-term dilogarithm combination in \eqref{twotermLi2}. So, we may repeat a previous argument to explain the uniqueness of the closed forms in 
 Corollary \ref{corollarylattice}. Using variants of the input sequences in \eqref{latticeinput}, we may obtain many further integrals involving $2 \text{{\bf 
 E}}(y) - \text{{\bf K}}(y)$, and we encourage the exploration of this. 

\section{Conclusion}
 We conclude by briefly considering some subjects of further research connected with the above material. 

 Although our article is mainly devoted to integrals involving threefold products of products of $\text{{\bf K}}$ and/or $\text{{\bf E}}$, a careful 
 examination of Zhou's 2014 article \cite{Zhou2014Legendre} relative to the main material in this article and relative to material concerning integrals 
 involving $\text{{\bf K}}$ and/or $\text{{\bf E}}$ from relevant references such as \cite{Campbell2021New,CampbellDAurizioSondow2019} leads to new 
 results on CG-type integrals that involve twofold products and that are related to what Zhou refers to as \emph{Ramanujan transformations} 
 \cite{Zhou2014Legendre}. By ``reverse-engineering'' formulas from Ramanujan's notebooks that were presented without proof, Zhou 
 \cite{Zhou2014Legendre} showed that 
\begin{equation}\label{Ramanujantransform1}
 \text{{\bf K}}^{2}\left( \sqrt{t} \right) = \frac{2}{\pi} \int_{0}^{1} 
 \frac{ \text{{\bf K}}\left( \sqrt{\mu} \right) \text{{\bf K}}\left( \sqrt{1 - \mu} \right) }{1 - \mu t} \, d\mu 
\end{equation}
 and that 
\begin{equation}\label{Ramanujantransform2}
 \text{{\bf K}}^{2}\left( \sqrt{1 - t} \right) 
 = \frac{8}{\pi} \int_{0}^{1} \frac{ \text{{\bf K}}\left( \sqrt{\mu} \right) 
 \text{{\bf K}}\left( \sqrt{1 - \mu} \right) }{ (1 + \sqrt{t})^{2} - \mu (1 - \sqrt{t})^{2} } \, d\mu, 
\end{equation}
 and these Ramanujan-inspired transforms may be used to generalize the CG-type integral in 
 \eqref{maintwofold}, in the following manner, using material from 
 \cite{Campbell2021New,CampbellDAurizioSondow2019}. 

 The closed-form evaluation 
\begin{equation}\label{Polishformula}
 \sum_{i, j = 0}^{\infty} \frac{ \binom{2i}{i}^2 \binom{2j}{j}^2 }{4^{2i+2j} (i + j + 1)} 
 = \frac{14 \zeta(3)}{\pi^2} 
\end{equation}
 introduced in \cite{Campbell2021New} can be shown to be equivalent to a corresponding evaluation for 
\begin{equation}\label{Polishequivalent}
 \frac{7 \zeta (3)}{2} = \int_{0}^{1} \text{{\bf K}}^{2}\left( \sqrt{t} \right) \, dt, 
\end{equation}
 through the use of Fourier--Legendre expansion in \eqref{FLofK}, and this approach was generalized and explored in \cite{Campbell2021New} through the 
 use of the moment formula for Legendre polynomials. So, from the evaluation in terms of Ap\'{e}ry's constant for \eqref{Polishequivalent}, by applying the 
 operator $\int_{0}^{1} \cdot \, dt $ to both sides of \eqref{Ramanujantransform1} and then applying Fubini's theorem, this gives us an evaluation for 
\begin{equation}\label{generalizetwofold}
 -\frac{7 \pi \zeta (3)}{4} = \int_{0}^{1} 
 \frac{\ln (1-\mu )}{\mu } \text{{\bf K}}\left( \sqrt{\mu} \right) \text{{\bf K}}\left( \sqrt{1 - \mu} \right) \, d\mu, 
\end{equation}
 which was proved in a different way by Wan in \cite{Wan2012}. By mimicking the above described approach for evaluating 
 \eqref{generalizetwofold}, with the use of the many infinite families of double sums generalizing or otherwise related to \eqref{Polishformula} given 
 in \cite{Campbell2021New} and the follow-up article \cite{ChuCampbell2023} greatly generalizing the techniques from \cite{Campbell2021New}, this 
 leads us to new families of evaluations for CG-type integrals of the form 
\begin{equation}\label{generalizetwofold}
 \int_{0}^{1} F(\mu) \text{{\bf K}}\left( \sqrt{\mu} \right) \text{{\bf K}}\left( \sqrt{1 - \mu} \right) \, d\mu
\end{equation}
 for elementary functions $F(\mu)$, 
 and similarly with respect to the Ramanujan transform in \eqref{Ramanujantransform2}. 
 We leave it to a separate project to pursue a full exploration of this. 

 Recall that our SIBP variant presented as Theorem \ref{SIBPvariant} involved the series $${f}(x) = \sum_{n=0}^{\infty} x^{n + \frac{1}{2}} {a}_{n} \ \ 
 \ \text{and} \ \ \ \mathfrak{g}(x) = \sum_{n=0}^{\infty} x^{n+ \frac{1}{2}} {b}_{n}.$$ A fruitful area of research to explore would involve the 
 application of further variants of SIBP based on the series obtained by replacing $x_{n + \frac{1}{2}}$ in the expansions for $f(x)$ and 
 $\mathfrak{g}(x) $, respectively, with $x^{n+h_{1}}$ and $x^{n + h_{2}}$ for half-integer parameters $h_{1}$ and $h_{2}$. In a similar spirit, the 
 SIBP formula in \eqref{displayedSIBP} may be extended using operators other than $D^{\pm 1/2}$ such as $D^{\pm 1/4}$. 

\subsection*{Acknowledgements}
 The author is very grateful to Dr.\ Yajun Zhou for many useful discussions related to the results introduced in this article. 

\subsection*{Competing interests statement}
 There are no competing interests to declare.

 \

John M.\ Campbell

Department of Mathematics

Toronto Metropolitan University

350 Victoria St, Toronto, ON M5B 2K3

 \ 

{\tt jmaxwellcampbell@gmail.com}


\begin{thebibliography}{99}

\bibitem{Askey1982}
 Askey, R.: 
 An integral of products of Legendre functions and a Clebsch-Gordan sum. 
 Lett. Math. Phys.\ {\bf 6}, 299--302 (1982) 

\bibitem{AusserlechnerGlasser2020}
 Ausserlechner, U., Glasser, M. L., Zhou, Y.: 
 Symmetries of certain double integrals related to Hall effect devices. 
 Ramanujan J. {\bf 53}(1), 39--48 (2020) 

\bibitem{Bressoud2007}
 Bressoud, D.M.: 
 A radical approach to real analysis. 2nd ed. Mathematical Association of America, Washington, DC (2007) 

\bibitem{Brychkov2008}
 Brychkov, Ya.: 
 Handbook of special functions. 
 CRC Press, Boca Raton (FL) (2008) 

\bibitem{Campbell2022}
 Campbell, J. M.: 
 Applications of Caputo operators in the evaluations of Clebsch---Gordan-type multiple elliptic integrals. 
 Integral Transforms Spec. Funct.\ (2022) 

\bibitem{Campbell2021New}
 Campbell, J.\ M.: 
 New families of double hypergeometric series for constants involving $1/\pi^2$. 
 Ann. Pol. Math.\ {\bf 126}(1), 1--20 (2021) 

\bibitem{Campbell2021Some}
 Campbell, J.\ M.: 
 Some nontrivial two-term dilogarithm identities. 
 Ir. Math. Soc. Bull.\ {\bf 88}, 31--37 (2021) 

\bibitem{CampbellCantariniDAurizio2022}
 Campbell, J. M., Cantarini, M., D’Aurizio, J.: 
 Symbolic computations via Fourier--Legendre expansions and fractional operators. 
 Integral Transforms Spec. Funct.\ {\bf 33}(2), 157--175 (2022) 

\bibitem{CampbellDAurizioSondow2019}
 Campbell, J. M., D’Aurizio, J., Sondow, J.: 
 On the interplay among hypergeometric functions, complete elliptic integrals, and Fourier--Legendre expansions. 
 J. Math. Anal. Appl.\ {\bf 479}(1), 90--121 (2019) 

\bibitem{Cantarini2022}
 Cantarini, M.: 
 A note on Clebsch--Gordan integral, Fourier--Legendre expansions and closed form for hypergeometric series. 
 Ramanujan J.\ {\bf 59}(2), 549--557 (2022) 

\bibitem{ChuCampbell2023}
 Chu, W., Campbell, J.\ M.: 
 Expansions over Legendre polynomials and infinite double series identities. 
 Ramanujan J.\ {\bf 60}(2), 317--353 (2023) 

\bibitem{DongLemus2002}
 Dong, S.-H., Lemus, R.: 
 The overlap integral of three associated Legendre polynomials. 
 Appl. Math. Lett.\ {\bf 15}(5), 541--546 (2002) 

\bibitem{Glasser1976}
 Glasser, M. L.: 
 Definite integrals of the complete elliptic integral $K$. 
 J. Res. Natl. Stand. Sec. B {\bf 80B}(2), 313--323 (1976) 

\bibitem{GlasserGuttmann1994}
 Glasser, M. L., Guttmann, A. J.: 
 Lattice Green function (at 0) for the 4D hypercubic lattice. 
 J. Phys. A, Math. Gen.\ {\bf 27}(21), 7011--7014 (1994) 

\bibitem{GlasserZhou2018}
 Glasser, M.\ L., Zhou, Y.: 
 A functional identity involving elliptic integrals. 
 Ramanujan J.\ {\bf 47}(2), 243--251 (2018) 

\bibitem{Kaplan1950}
 Kaplan, E. L.: 
 Multiple elliptic integrals. 
 J. Math. Phys. {\bf 29}, 69--75 (1950) 

\bibitem{Khoi2014}
 Khoi, V.\ T.: 
 Seifert volumes and dilogarithm identities. 
 J. Knot Theory Ramifications {\bf 23}(5), Article ID 1450025, 11 p.\ (2014) 

\bibitem{Lewin1981}
 Lewin, L.: 
 Polylogarithms and Associated Functions. New Holland, New York (1981) 

\bibitem{Lima2012}
 Lima, F. M. S.: 
 New definite integrals and a two-term dilogarithm identity. 
 Indag. Math., New Ser.\ {\bf 23}(1-2), 1--9 (2012) 

\bibitem{Lima2017}
 Lima, F. M. S.: 
 On the accessibility of Khoi’s dilogarithm identity involving the golden ratio. 
 Vietnam J. Math.\ {\bf 45}(4), 619--623 (2017) 

\bibitem{RogersWanZucker2015}
 Rogers, M., Wan, J. G., Zucker, I. J.: 
 Moments of elliptic integrals and critical $L$-values. 
 Ramanujan J.\ {\bf 37}(1), 113--130 (2015) 

\bibitem{Stewart2022}
 Stewart, S. M.: 
 Some simple proofs of Lima’s two-term dilogarithm identity. 
 Ir. Math. Soc. Bull.\ {\bf 89}, 43--49 (2022) 

\bibitem{Wan2012}
 Wan, J. G.: 
 Moments of products of elliptic integrals. 
 Adv. Appl. Math.\ {\bf 48}(1), 121--141 (2012) 

\bibitem{WanZucker2016}
 Wan, J.\ G., Zucker, I.\ J.: 
 Integrals of $K$ and $E$ from lattice sums. 
 Ramanujan J.\ {\bf 40}(2), 257--278 (2016) 

\bibitem{Zhou2022}
 Zhou, Y.: 
 $\mathbb{Q}$-linear dependence of certain Bessel moments. 
 Ramanujan J. {\bf 58}(3), 723--746 (2022) 

\bibitem{Zhou2015}
 Zhou, Y.: 
 Kontsevich--Zagier integrals for automorphic Green's functions. I. 
 Ramanujan J.\ {\bf 38}(2), 227--329 (2015) 

\bibitem{Zhou2017}
 Zhou, Y.: 
 Kontsevich--Zagier integrals for automorphic Green’s functions. II. 
 Ramanujan J.\ {\bf 42}(3), 623--688 (2017) 

\bibitem{Zhou2014Legendre}
 Zhou, Y.: 
 Legendre functions, spherical rotations, and multiple elliptic integrals. 
 Ramanujan J.\ {\bf 34}(3), 373--428 (2014) 

\bibitem{Zhou2019}
 Zhou, Y.: 
 On Laporta’s 4-loop sunrise formulae. 
 Ramanujan J.\ {\bf 50}(3), 465--503 (2019) 

\bibitem{Zhou2014On}
 Zhou, Y.: 
 On some integrals over the product of three Legendre functions. 
 Ramanujan J. {\bf 35}(2), 311--326 (2014) 

\bibitem{Zhou2016}
 Zhou, Y.: 
 Ramanujan series for Epstein zeta functions. 
 Ramanujan J.\ {\bf 40}(2), 367--388 (2016) 

\bibitem{Zhou2018}
 Zhou, Y.: 
 Two definite integrals involving products of four Legendre functions. 
 Ramanujan J.\ {\bf 45}(2), 299--317 (2018) 

\end{thebibliography}
\end{document}